\newtheorem{proposition}{Proposition}
\newtheorem{assumption}{Assumption}
\newtheorem{theorem}{Theorem}
\newtheorem{lemma}{Lemma}
\newtheorem{definition}{Definition}
\newtheorem{remark}{Remark}
\def\ASABCP{\texttt{ASA-BCP}}
\def\NMBC{\texttt{NMBC}}
\def\ALGENCAN{\texttt{ALGENCAN}}
\def\LANCELOT{\texttt{LANCELOT~B}}
\def\R{\rm{I\!R}}
\def\square{{\setbox0=\hbox{X}\hbox to \ht0{\vrule\hss\vbox to \ht0{
  \hrule width \ht0\vfil\hrule width \ht0}\vrule}}}
\begin{document}
\pagestyle{empty}

\vskip 2cm \begin{center}{\huge A Two-Stage Active-Set Algorithm for Bound-Constrained Optimization}\end{center}
\par\bigskip
\centerline{\Large A. Cristofari$^\dag$, M. De Santis$^\ddag$, S. Lucidi$^\dag$, F. Rinaldi$^*$}
\par\bigskip\bigskip

\centerline{$^{\dag}$Dipartimento di Ingegneria Informatica, Automatica e Gestionale} \centerline{Sapienza Universit\`{a} di Roma} \centerline{Via Ariosto, 25, 00185 Rome, Italy}
\par\medskip
\centerline{$^{\ddag}$ Institut f\"ur Mathematik } \centerline{Alpen-Adria-Universit\"at Klagenfurt}
\centerline{Universit\"atsstr. 65-67, 9020 Klagenfurt, Austria}
\par\medskip
\centerline{ $^{*}$Dipartimento di Matematica}\centerline{Universit\`a  di Padova }  \centerline{Via Trieste, 63, 35121 Padua,
Italy}
\par\medskip
\centerline{e-mail (Cristofari): cristofari@dis.uniroma1.it}\centerline{e-mail (De Santis): marianna.desantis@aau.at}\centerline{e-mail (Lucidi): lucidi@dis.uniroma1.it} \centerline{e-mail (Rinaldi):
rinaldi@math.unipd.it}

\par\bigskip\noindent {\small \centerline{\bf Abstract}}
In this paper, we describe a two-stage method for solving optimization problems with bound constraints.
It combines the active-set estimate described in~\cite{facchinei:1995} with a modification of the non-monotone line search framework
recently proposed in~\cite{desantis:2012}.
In the first stage, the algorithm exploits a property of the active-set estimate that ensures a significant reduction
in the objective function when setting to the bounds all those variables estimated active.
In the second stage, a truncated-Newton strategy is used in the subspace of the variables estimated non-active.
In order to properly combine the two phases, a proximity check is included in the scheme.
This new tool, together  with the other theoretical features of the two stages, enables us to prove global convergence.
Furthermore, under additional standard assumptions, we can show that the algorithm converges at a superlinear rate.
Promising experimental results demonstrate the effectiveness of the proposed method.
\bigskip\par\noindent
{\bf Keywords.} Bound-constrained optimization. Large-scale optimization. Active-set methods. Non-monotone stabilization techniques.

\par\bigskip\noindent
{\bf AMS subject classifications.} 90C30. 90C06. 49M15.
\pagestyle{plain} \setcounter{page}{1}

\section{Introduction}\label{sec:introduction}

In this paper, we deal with nonlinear optimization problems with bound constraints.
In the literature, different approaches have been proposed for solving such problems.
Among them, we recall trust-region methods (see,~e.g.,~\cite{conn:1988,lin:1999}), interior-point methods
(see,~e.g.,~\cite{dennis:1998,heinkenschloss:1999,kanzow:2006}),
active-set methods (see,~e.g.,~\cite{bertsekas:1982,facchinei:2002,hager:2006,schwartz:1997,facchinei:1998,cheng:2012})
and second-order methods (see, e.g.,~\cite{andreani:2010,birgin:2002}).

Even though a large number of different methods is available, there is still a strong interest in developing efficient methods
to solve box-constrained problems. This is mainly due to the fact that many real-world applications can be modeled as
large-scale problems with bound constraints.
Furthermore, those methods are used as building blocks in many algorithmic frameworks for nonlinearly constrained problems (e.g., in
penalty-based approaches).

%

Recently, an active-set method, namely the \NMBC\ algorithm, was proposed in~\cite{desantis:2012}.
\NMBC\ algorithm has three main features: it makes use of the technique described in~\cite{facchinei:1995} to identify active constraints;
it builds up search directions by combining a truncated-Newton strategy (used in the subspace of the non-active constraints)
with a Barzilai--Borwein strategy~\cite{barzilai:1988} (used in the subspace of the active constraints); and it generates
a new iterate by means of a non-monotone line search procedure with backtracking.

Even though numerical results reported in~\cite{desantis:2012} were promising, the method has a drawback that might
affect its performance in some cases. Indeed, due to the fact that the search direction is given
by two different subvectors (the one generated by means of the truncated-Newton strategy
and the one obtained by means of the Barzilai--Borwein strategy), we might end up with a badly scaled direction.
When dealing with such a direction, finding a good starting stepsize can become pretty hard.

In this paper, we give a twofold contribution. On the one hand, we describe and analyze an important theoretical feature of the
active-set estimate proposed by Facchinei and Lucidi in~\cite{facchinei:1995}. In particular,
we prove that under suitable assumptions, a significant reduction in the objective function
can be obtained when setting to the bounds all those variables estimated active. In this way, we extend to box-constrained nonlinear problems
a similar result already proved in \cite{desantis:2016} for $\ell_1$-regularized least squares problems, and in~\cite{buchheim:2015} for
quadratic problems with non-negativity constraints.

On the other hand, thanks to the descent property of the active-set estimate, we are able to define a new algorithmic
scheme that overcomes the issues described above for the \NMBC\ algorithm. More specifically, we define a two-stage
algorithmic framework that suitably combines the active-set estimate proposed in~\cite{facchinei:1995} with
the non-monotone line search procedure described in~\cite{desantis:2012}. In the first stage of our framework, we set
the estimated active variables to the corresponding bounds. Then, in the second stage,
we generate a search direction in the subspace of the non-active variables
(employing a suitably modified truncated-Newton step) to get a new iterate.

There are three main differences between the method we propose here and the one in~\cite{desantis:2012}:
\begin{enumerate}
 \item thanks to the two stages, we can get rid of the Barzilai--Borwein
step for the active variables, thus avoiding the generation of badly scaled search directions;
\item the search direction is computed only in the subspace of the non-active variables, allowing savings in terms of CPU time,
especially when dealing with large-scale problems;
\item a specific proximity check is included in order to guarantee global convergence of the method. This is crucial, from a theoretical point of view,
      since we embed the two stages described above within a non-monotone stabilization framework.
\end{enumerate}

Regarding the theoretical properties of the algorithm, we prove that a non-monotone strategy is able to guarantee global convergence to stationary points
even if at each iteration a gradient-related direction is generated only in the subspace of the non-active variables.
Furthermore, we prove that, under standard additional assumptions, the algorithm converges at a superlinear rate.

The paper is organized as follows.
In Section~\ref{sec:problemDefinition}, we formalize the problem and introduce the notation that will be used throughout the paper.
In Section~\ref{sec:act_set}, we present our active-set estimate, stating some theoretical results, proved in Appendix~\ref{app:as}.
In Section~\ref{sec:alg}, we describe our two-stage active-set algorithm (a formal description of the algorithm can be found in Appendix~\ref{app:alg})
and report the theorems related to the convergence. The detailed convergence analysis of the algorithm is reported in Appendix~\ref{app:conv}.
Finally, our numerical experience is presented in Section~\ref{sec:numres}, and some conclusions are drawn in Section~\ref{sec:conc}.

\section{Problem Definition and Notations}\label{sec:problemDefinition}
We address the solution of bound-constrained problems of the form:
\begin{equation}\label{prob}
\min\ \{f(x): l\le x\le u\},
\end{equation}
where $f \in C^{2}(\R^n)$; $x,l,u \in \R^n$, and $l<u$.

In the following, we denote by $g(x)$ and $H(x)$ the $n$ gradient vector
and the $n\times n$ Hessian matrix of $f(x)$, respectively.
We also indicate with $\|\cdot\|$ the Euclidean norm. Given a vector $v\in \R^{n}$ and an index set $I \subseteq
\{1,\ldots,n\}$, we denote by $v_I$ the subvector with components $v_i$, $i\in I$.
Given a matrix \mbox{$H \in \mathcal \R^{n\times n}$}, we denote
by $H_{I \,I}$ the submatrix with components $h_{ij}$ with \mbox{$i,j \in I$}, and
by $\lambda_{\text{max}}$ its largest eigenvalue.
The open ball with center $x$ and radius $\rho>0$ is denoted by ${\cal B}(x,\rho)$.
Finally, given $x \in \R^n$, we indicate with $[x]^{\sharp}$ the projection of $x$ onto $[l,u]$, where $l,u \in \R^n$ define
the feasible region of problem~\eqref{prob}.

Now, we give the formal definition of stationary points for problem~\eqref{prob}.
\begin{definition}
A point $x^{*}\in [l,u]$ is called stationary point of problem~\eqref{prob}
iff it satisfies the following first-order necessary optimality conditions:
\begin{align}
\label{opt1} g_i(x^{*}) \ge 0, \quad & \text{ if  } x^{*}_{i}=l_{i}, \\
\label{opt2} g_i(x^{*}) \le 0, \quad & \text{ if  } x^{*}_{i}=u_{i}, \\
\label{opt3} g_i(x^{*}) = 0, \quad & \text{ if  } l_{i}< x^{*}_{i}< u_{i}.
\end{align}
These conditions can be equivalently written as:
\begin{align}
\label{kkt1} & g(x^*) - \lambda^* + \mu^* = 0, \\
\label{kkt2} & (l_i-x^*_i)\,\lambda^*_i = 0, \quad \qquad \,\,\,i=1,\ldots,n, \\
\label{kkt3} & (x^*_i-u_i)\,\mu^*_i = 0,  \quad \qquad \,i=1,\ldots,n,\\
\label{kkt4} & \lambda_i^* \ge 0, \quad \mu_i^* \ge 0, \quad  \qquad i=1,\ldots,n,
\end{align}
where $\lambda^*, \mu^* \in \R^{n}$ are the KKT multipliers.
\end{definition}

\section{Active-Set Estimate: Preliminary Results and Properties}\label{sec:act_set}
As we will see later on, the use of a technique to estimate the active constraints plays a crucial role in the development of
a theoretically sound and computationally efficient algorithmic framework.
The active-set estimation we consider for box-constrained nonlinear problems takes inspiration from the approach
first proposed in~\cite{dipillo:1984}, and further studied in~\cite{facchinei:1995},
based on the use of some approximations of KKT multipliers.

Let $x$ be any feasible point, and $\lambda(x)$, $\mu(x)$ be some appropriate approximations of the KKT multipliers $\lambda^*$ and $\mu^*$.
We define the following index subsets:
\begin{equation}\label{Al}
A_l(x) := \left\{i\in \{1,\ldots,n \} \, \colon \, l_i \le x_i\le l_i + \epsilon\lambda_i(x), \, g_i(x)>0 \right\},
\end{equation}
\begin{equation}\label{Au}
A_u(x) := \left\{i\in \{1,\ldots,n \} \, \colon \, u_i - \epsilon\mu_i(x) \le x_i \le u_i, \, g_i(x)<0 \right\},
\end{equation}
\begin{equation}\label{N}
N(x) := \left\{i \in \{1,\dots,n\} \, \colon \, i \notin A_l(x) \cup A_u(x)\right\},
\end{equation}
where $\epsilon > 0$.

In particular, $A_l(x)$ and $A_u(x)$ contain the indices of the variables estimated active at the lower bound and the upper bound, respectively.
The set $N(x)$ includes the indices of the variables estimated non-active.

In this paper, $\lambda(x)$ and $\mu(x)$ are defined as the multiplier functions introduced in~\cite{grippo2:1991}:
starting from the solution of~\eqref{kkt1} at $x$, and then minimizing the error over~\eqref{kkt2}--\eqref{kkt4}, it is possible to compute
the functions $\lambda\colon\R^n\rightarrow\R^n$ and $\mu\colon\R^n\rightarrow\R^n$ as:
\begin{align}
& \label{lambdafunc}\lambda_i(x) := \frac{(u_i-x_i)^2}{(l_i-x_i)^2+(u_i-x_i)^2} g_i(x), \qquad i = 1,\dots,n, \\
& \label{mufunc}\mu_i(x) := -\frac{(l_i-x_i)^2}{(l_i-x_i)^2+(u_i-x_i)^2} g_i(x), \, \quad i = 1,\dots,n.
\end{align}

By adapting the results shown in~\cite{facchinei:1995}, we can state the following proposition.
\begin{proposition}\label{prop:estim}
If $(x^*,\lambda^*,\mu^*)$ satisfies KKT conditions for problem~\eqref{prob}, then there exists a neighborhood ${\cal B}(x^*,\rho)$ such that
\begin{itemize}
\item[] $\{i \, \colon \, x^*_i = l_i, \, \lambda^*_i > 0 \} \subseteq A_l(x) \subseteq \{i \, \colon \, x^*_i = l_i\}$,
\item[] $\{i \, \colon \, x^*_i = u_i, \, \mu^*_i > 0 \} \subseteq A_u(x) \subseteq \{i \, \colon \, x^*_i = u_i\}$,
\end{itemize}
for each $x \in {\cal B}(x^*,\rho)$.

Furthermore, if strict complementarity holds, then
\begin{itemize}
\item[] $\{i \, \colon \, x^*_i = l_i, \, \lambda^*_i > 0 \} = A_l(x) = \{i \, \colon \, x^*_i = l_i\}$,
\item[] $\{i \, \colon \, x^*_i = u_i, \, \mu^*_i > 0 \} = A_u(x) = \{i \, \colon \, x^*_i = u_i\}$,
\end{itemize}
for each $x \in {\cal B}(x^*,\rho)$.
\end{proposition}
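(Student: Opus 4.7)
The plan is to prove the inclusions via a continuity argument that combines the KKT conditions~\eqref{kkt1}--\eqref{kkt4} with the explicit algebraic form of the multiplier functions~\eqref{lambdafunc}--\eqref{mufunc}. My first step will be to identify the limits of the relevant quantities at $x^*$. If $x_i^* = l_i$, direct substitution into~\eqref{lambdafunc} gives $\lambda_i(x^*) = g_i(x^*)$; since $x_i^* < u_i$ forces $\mu_i^* = 0$ by~\eqref{kkt3}, \eqref{kkt1} yields $g_i(x^*) = \lambda_i^*$, and hence $\lambda_i(x^*) = \lambda_i^*$. Symmetrically, $\mu_i(x^*) = \mu_i^*$ when $x_i^* = u_i$. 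Crucially, $\lambda_i(x^*) = 0$ whenever $x_i^* > l_i$: if $l_i < x_i^* < u_i$ then $g_i(x^*) = 0$ by~\eqref{opt3}, and if $x_i^* = u_i$ the numerator factor $(u_i - x_i^*)^2$ already vanishes.

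Given these limits, the first inclusion in the chain for $A_l(x)$ is immediate. For an index $i$ with $x_i^* = l_i$ and $\lambda_i^* > 0$, continuity of $g$ and $\lambda(\cdot)$ gives, on a small enough ball ${\cal B}(x^*,\rho)$, that $g_i(x) > 0$ and that $\epsilon\lambda_i(x)$ is bounded below by a positive constant; feasibility combined with $x_i \to l_i$ then produces $l_i \le x_i \le l_i + \epsilon \lambda_i(x)$, so $i \in A_l(x)$ by~\eqref{Al}. For the reverse inclusion I will argue by contrapositive: if $x_i^* > l_i$, then by the identity above $\epsilon \lambda_i(x) \to 0$, whereas $x_i - l_i \to x_i^* - l_i > 0$, so the upper-bound condition in~\eqref{Al} fails on a sufficiently small ball and $i \notin A_l(x)$.

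The two inclusions for $A_u(x)$ follow by the symmetric argument with the roles of $(l,\lambda)$ and $(u,\mu)$ interchanged. The strict-complementarity refinement is then a short consequence: the implications $x_i^* = l_i \Rightarrow \lambda_i^* > 0$ and $x_i^* = u_i \Rightarrow \mu_i^* > 0$ merge the outer and inner sets in each chain and promote both inclusions to equalities. The proof is routine once the limiting values are in place; the only real subtlety lies in the case split for the reverse inclusion, where verifying $\lambda_i(x^*) = 0$ both when $l_i < x_i^* < u_i$ and when $x_i^* = u_i$ relies on the specific \emph{switching} structure of the multiplier function~\eqref{lambdafunc}, which collapses to zero whenever $x$ sits at the upper bound or the gradient vanishes.
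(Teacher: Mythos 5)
Your argument is correct. Note, however, that the paper does not actually prove Proposition~\ref{prop:estim}: it simply states that the result follows "by adapting the results shown in" the cited work of Facchinei and Lucidi, so there is no in-paper proof to compare against. Your direct continuity argument supplies exactly the missing adaptation, and it is the standard one: the key identities $\lambda_i(x^*)=\lambda_i^*$ when $x_i^*=l_i$ (via $\mu_i^*=0$ from~\eqref{kkt3} and then~\eqref{kkt1}), $\mu_i(x^*)=\mu_i^*$ when $x_i^*=u_i$, and $\lambda_i(x^*)=0$ whenever $x_i^*>l_i$ (splitting into the interior case, where $g_i(x^*)=0$, and the upper-bound case, where the factor $(u_i-x_i^*)^2$ kills the expression) are all verified correctly, and the two inclusions then follow from continuity exactly as you describe. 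Two small points you should make explicit in a written-up version: (i) the functions $\lambda_i(\cdot)$ and $\mu_i(\cdot)$ are well defined and continuous on $[l,u]$ only because $l_i<u_i$ keeps the denominator $(l_i-x_i)^2+(u_i-x_i)^2$ bounded away from zero, which is what licenses every limit you take; and (ii) the statement implicitly restricts to feasible $x\in{\cal B}(x^*,\rho)$ (the estimates~\eqref{Al}--\eqref{Au} are only defined for feasible points), which is where the inequality $l_i\le x_i$ in the membership test comes from. Finally, since the index set is finite, the radius $\rho$ is obtained by taking the minimum over the finitely many per-index neighborhoods; this is worth one sentence. None of these affect the validity of the proof.
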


We notice that stationary points can be characterized by using the active-set estimate, as shown in the next propositions.
\begin{proposition}\label{prop:stationary_point}
A point $\bar{x} \in [l,u]$ is a stationary point of problem~\eqref{prob} iff the following conditions hold:
\begin{gather}
\label{opt_cond_1} \max\ \{l_i - \bar x_i, -g_i(\bar x)\} = 0, \quad i \in A_l(\bar x), \\
\label{opt_cond_2} \max\ \{\bar x_i - u_i, g_i(\bar x)\} = 0, \quad i \in A_u(\bar x), \\
\label{opt_cond_3} g_i(\bar x) = 0, \quad i \in N(\bar x).
\end{gather}
\end{proposition}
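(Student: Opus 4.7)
The plan is to prove both implications by a simple case analysis on whether an index $i$ belongs to $A_l(\bar x)$, $A_u(\bar x)$, or $N(\bar x)$, after first simplifying the two max-expressions. Since $\bar x \in [l,u]$, we have $l_i - \bar x_i \le 0$ and $\bar x_i - u_i \le 0$ for every $i$, and by the definitions \eqref{Al}-\eqref{Au} the sign of $g_i(\bar x)$ is strictly positive on $A_l(\bar x)$ and strictly negative on $A_u(\bar x)$. Consequently \eqref{opt_cond_1} collapses to the requirement $\bar x_i = l_i$ for every $i \in A_l(\bar x)$ (because $-g_i(\bar x) < 0$ cannot be the larger term), and similarly \eqref{opt_cond_2} collapses to $\bar x_i = u_i$ for every $i \in A_u(\bar x)$.

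For the necessity direction I would assume $\bar x$ satisfies \eqref{opt1}-\eqref{opt3} and read off the three claimed conditions index by index. For $i \in A_l(\bar x)$, the strict inequality $g_i(\bar x) > 0$ together with \eqref{opt1}-\eqref{opt3} excludes both the interior case and $\bar x_i = u_i$, leaving only $\bar x_i = l_i$, which is exactly the reduced form of \eqref{opt_cond_1}. The argument for $i \in A_u(\bar x)$ is completely symmetric. For $i \in N(\bar x)$, I need to rule out the possibility that $g_i(\bar x) \ne 0$: the explicit formulas \eqref{lambdafunc}-\eqref{mufunc} give $\lambda_i(\bar x) = g_i(\bar x)$ when $\bar x_i = l_i$ and $\mu_i(\bar x) = -g_i(\bar x)$ when $\bar x_i = u_i$, so if $\bar x_i = l_i$ with $g_i(\bar x) > 0$ then the membership inequality of $A_l(\bar x)$ is trivially satisfied and $i$ would lie in $A_l(\bar x)$ (and similarly for the upper bound), contradicting $i \in N(\bar x)$; stationarity rules out $g_i(\bar x) \ne 0$ when $l_i < \bar x_i < u_i$ directly through \eqref{opt3}.

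For the sufficiency direction I would reverse these observations. For $i \in A_l(\bar x)$, the reduced condition gives $\bar x_i = l_i$ while the defining inequality of $A_l(\bar x)$ gives $g_i(\bar x) > 0$, so \eqref{opt1} is satisfied; the situation $\bar x_i = u_i$ is excluded by $l < u$ together with $\bar x_i = l_i$, and the interior case does not apply. The case $i \in A_u(\bar x)$ is handled symmetrically using \eqref{opt_cond_2} and \eqref{opt2}. For $i \in N(\bar x)$, the hypothesis $g_i(\bar x) = 0$ automatically fulfills whichever of \eqref{opt1}-\eqref{opt3} corresponds to the position of $\bar x_i$ within $[l_i, u_i]$.

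The only mildly delicate step is the $N(\bar x)$-part of the necessity direction: one has to use the specific form of the multiplier functions to verify that a feasible point at a bound with a strict sign on $g_i$ is automatically captured by either $A_l(\bar x)$ or $A_u(\bar x)$. Everything else is routine sign bookkeeping combined with the feasibility of $\bar x$, so I do not expect further difficulties.
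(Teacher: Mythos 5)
Your proposal is correct and follows essentially the same route as the paper's proof: both reduce \eqref{opt_cond_1}--\eqref{opt_cond_2} to $\bar x_i = l_i$ (resp.\ $\bar x_i = u_i$) using the sign of $g_i(\bar x)$ built into the definitions \eqref{Al}--\eqref{Au}, and both use the explicit formulas \eqref{lambdafunc}--\eqref{mufunc} to show that a boundary index with nonzero gradient component is captured by $A_l(\bar x)$ or $A_u(\bar x)$. The only difference is organizational (you run the necessity direction over the three index sets, the paper runs it over the position of $\bar x_i$ in $[l_i,u_i]$), and you make explicit the evaluation $\lambda_i(\bar x)=g_i(\bar x)$ at $\bar x_i=l_i$ that the paper leaves implicit.
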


\begin{proof}
See Appendix~\ref{app:as}.
\end{proof}

\begin{proposition}\label{prop:stationary_point2}
Given $\bar{x} \in [l,u]$, assume that
\begin{equation}\label{active_set_empty}
\{i \in A_l(\bar x) \, \colon \, \bar x_i>l_i\} \cup \{i \in A_u(\bar x) \, \colon \, \bar x_i < u_i\} = \emptyset.
\end{equation}
Then, $\bar x$ is a stationary point of problem~\eqref{prob} iff
\[
g_i(\bar x) = 0 \text{ for all } i \in N(\bar x).
\]
\end{proposition}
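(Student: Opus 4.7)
The plan is to derive Proposition~\ref{prop:stationary_point2} as a direct consequence of Proposition~\ref{prop:stationary_point}, by checking that the hypothesis~\eqref{active_set_empty} forces the first two families of conditions in that proposition to hold automatically, so that only the third family~\eqref{opt_cond_3} is substantive.

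First I would simply apply Proposition~\ref{prop:stationary_point}, which says $\bar x$ is stationary iff the three conditions~\eqref{opt_cond_1}--\eqref{opt_cond_3} hold. The forward direction (stationary $\Rightarrow$ $g_i(\bar x)=0$ on $N(\bar x)$) is then immediate from~\eqref{opt_cond_3} and requires no use of the hypothesis. For the reverse direction, I have to show that under~\eqref{active_set_empty}, vanishing of $g_i(\bar x)$ on $N(\bar x)$ already implies~\eqref{opt_cond_1} and~\eqref{opt_cond_2}.

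The key observation is that the hypothesis~\eqref{active_set_empty} says that every index $i \in A_l(\bar x)$ satisfies $\bar x_i = l_i$, and every index $i \in A_u(\bar x)$ satisfies $\bar x_i = u_i$. Fix $i \in A_l(\bar x)$: then $l_i - \bar x_i = 0$, and by the very definition~\eqref{Al} of $A_l(\bar x)$ we also have $g_i(\bar x) > 0$, whence $-g_i(\bar x) < 0$. Consequently $\max\{l_i - \bar x_i,\, -g_i(\bar x)\} = 0$, i.e.~\eqref{opt_cond_1} is satisfied. The symmetric argument on $A_u(\bar x)$, using $\bar x_i = u_i$ together with $g_i(\bar x) < 0$ from~\eqref{Au}, establishes~\eqref{opt_cond_2}. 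The assumed condition $g_i(\bar x) = 0$ on $N(\bar x)$ gives~\eqref{opt_cond_3}, so Proposition~\ref{prop:stationary_point} yields stationarity of $\bar x$.

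There is essentially no obstacle here: the entire argument is a definition-chase, and the substance is already packaged in Proposition~\ref{prop:stationary_point}. The only thing one has to be careful about is to actually use both parts of the definitions of $A_l(\bar x)$ and $A_u(\bar x)$, namely the sign conditions $g_i(\bar x) > 0$ (resp.~$g_i(\bar x) < 0$), since without them the two maxima in~\eqref{opt_cond_1}--\eqref{opt_cond_2} would not necessarily vanish at a point on the boundary.
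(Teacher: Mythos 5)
Your proof is correct and follows essentially the same route as the paper's: both reduce the claim to Proposition~\ref{prop:stationary_point} by observing that hypothesis~\eqref{active_set_empty}, combined with the sign conditions built into the definitions~\eqref{Al} and~\eqref{Au}, makes conditions~\eqref{opt_cond_1} and~\eqref{opt_cond_2} hold automatically. Your write-up merely spells out the max computations that the paper leaves implicit.
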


\begin{proof}
See Appendix~\ref{app:as}.
\end{proof}

\begin{proposition}\label{prop:stationary_point3}
Given $\bar{x} \in [l,u]$, assume that
\begin{equation}\label{stationary_free}
g_i(\bar x) = 0 \text{ for all } i \in N(\bar x).
\end{equation}
Then, $\bar x$ is a stationary point of problem~\eqref{prob} iff
\[
\{i \in A_l(\bar x) \, \colon \, \bar x_i > l_i\} \cup \{i \in A_u(\bar x) \, \colon \, \bar x_i < u_i\} = \emptyset.
\]
\end{proposition}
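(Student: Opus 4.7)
The plan is to reduce the claim to Proposition~\ref{prop:stationary_point}, using in a crucial way the \emph{strict} sign inequalities $g_i(\bar x)>0$ and $g_i(\bar x)<0$ built into the definitions of $A_l(\bar x)$ and $A_u(\bar x)$ in \eqref{Al}--\eqref{Au}. Assumption~\eqref{stationary_free} already matches condition~\eqref{opt_cond_3}, so the whole game is to show that, modulo this assumption, the vanishing of the two \emph{max} quantities in \eqref{opt_cond_1}--\eqref{opt_cond_2} is equivalent to the emptiness condition claimed in the proposition.

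For the ``only if'' direction, I would assume $\bar x$ is stationary and invoke Proposition~\ref{prop:stationary_point}. Pick any $i \in A_l(\bar x)$. By \eqref{opt_cond_1} we have $\max\{l_i-\bar x_i,\,-g_i(\bar x)\}=0$, while the definition of $A_l(\bar x)$ forces $g_i(\bar x)>0$, hence $-g_i(\bar x)<0$. The only way the maximum can still equal $0$ is $l_i-\bar x_i=0$, i.e., $\bar x_i=l_i$. Therefore $\{i\in A_l(\bar x)\colon \bar x_i>l_i\}=\emptyset$. The symmetric argument on $A_u(\bar x)$, using $g_i(\bar x)<0$ inside \eqref{opt_cond_2}, gives $\bar x_i=u_i$ for every $i\in A_u(\bar x)$, and hence the whole union is empty.

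For the ``if'' direction, I would assume the union is empty. Then for every $i\in A_l(\bar x)$ one has $\bar x_i=l_i$, so $l_i-\bar x_i=0$, and combined with $-g_i(\bar x)<0$ (from the definition of $A_l$) this gives $\max\{l_i-\bar x_i,-g_i(\bar x)\}=0$, i.e., \eqref{opt_cond_1}. A symmetric argument yields \eqref{opt_cond_2}. Condition \eqref{opt_cond_3} is exactly hypothesis \eqref{stationary_free}. Applying Proposition~\ref{prop:stationary_point} concludes that $\bar x$ is stationary.

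There is no real obstacle: the content of the proposition is essentially a bookkeeping consequence of Proposition~\ref{prop:stationary_point} together with the strict inequality on the gradient components encoded in the active-set estimate. If anything, the only point worth stating carefully is why the strict inequality rules out the ``slack'' configuration $\bar x_i>l_i$ on $A_l(\bar x)$ (and similarly on $A_u(\bar x)$), which is precisely what prevents \eqref{opt_cond_1}--\eqref{opt_cond_2} from being satisfied except when the variables sit at the bounds.
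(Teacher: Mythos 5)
Your proof is correct and follows essentially the same route as the paper's: both directions come down to the strict sign conditions $g_i(\bar x)>0$ on $A_l(\bar x)$ and $g_i(\bar x)<0$ on $A_u(\bar x)$ forcing the estimated-active variables to sit exactly at their bounds. The only cosmetic difference is that you channel both implications through the characterization of Proposition~\ref{prop:stationary_point}, whereas the paper verifies conditions~\eqref{opt1}--\eqref{opt3} directly for the ``if'' part and derives a contradiction with~\eqref{opt3} for the ``only if'' part; the underlying argument is identical.
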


\begin{proof}
See Appendix~\ref{app:as}.
\end{proof}

\subsection{Descent Property of the Active-Set}
In this subsection, we show that the active-set estimate can be used for computing a point that ensures a sufficient decrease in the objective function simply
by fixing the estimated active variables at the bounds.

First, we give an assumption on the parameter $\epsilon$ appearing in the definition of the active-set estimates $A_l(x)$ and $A_u(x)$ that will be used to prove the
main result in this subsection.

\begin{assumption}\label{ass:eps}
Assume that the parameter $\epsilon$ appearing in~\eqref{Al} and~\eqref{Au} satisfies the following conditions:
\begin{equation}\label{eq_ass:eps}
\left \{
\begin{array}{ll}
0 < \epsilon \le \dfrac{1}{\bar{\lambda}}, & \quad \mbox{ if } \ \bar{\lambda} > 0, \\
\epsilon > 0, & \quad \mbox{ otherwise, }
\end{array}
\right .
\end{equation}
where
\[
\bar{\lambda} := \max_{x \in [l,u]} \lambda_{\text{max}} (H(x)).
\]
\end{assumption}

Now, we state the main result of the subsection.

\begin{proposition}\label{prop1}
Let Assumption~\ref{ass:eps} hold. Let $x\in [l,u]$ be such that
\[
A_l(x) \cup A_u(x) \ne \emptyset,
\]
and let $\tilde x$ be the point defined as
\begin{align*}
\tilde x_i := l_i, \quad & i \in A_l(x), \\
\tilde x_i := u_i, \quad & i \in A_u(x), \\
\tilde x_i := x_i, \quad & i \in N(x),
\end{align*}
where $A_l(x)$, $A_u(x)$ and $N(x)$ are the index subsets defined as in~\eqref{Al}, \eqref{Au} and~\eqref{N}, respectively.

Then,
\[
f(\tilde x)-f(x) \le - \dfrac{1}{2\epsilon}\|x-\tilde x\|^2.
\]
\end{proposition}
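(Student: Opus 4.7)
\medskip\noindent\textbf{Proof plan.} The plan is to apply a second-order Taylor expansion of $f$ at $x$ in the direction of $\tilde x - x$, and then to control separately the gradient term (using the definitions of $A_l(x)$, $A_u(x)$ together with the explicit form of the multiplier functions~\eqref{lambdafunc}--\eqref{mufunc}) and the Hessian term (using Assumption~\ref{ass:eps}). Since $x,\tilde x\in[l,u]$ and $[l,u]$ is convex, the whole segment joining them stays in $[l,u]$, so by Taylor's theorem there exists $\xi$ on that segment with
\[
f(\tilde x)-f(x)=g(x)^{\top}(\tilde x - x)+\tfrac{1}{2}(\tilde x - x)^{\top}H(\xi)(\tilde x - x).
\]

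\medskip\noindent For the Hessian term, by definition of $\bar\lambda$ one has $(\tilde x - x)^{\top}H(\xi)(\tilde x - x)\le \bar\lambda\,\|\tilde x - x\|^2$, and Assumption~\ref{ass:eps} gives $\bar\lambda\le 1/\epsilon$ (trivially so when $\bar\lambda\le 0$), hence
\[
\tfrac{1}{2}(\tilde x - x)^{\top}H(\xi)(\tilde x - x)\le \tfrac{1}{2\epsilon}\|\tilde x - x\|^2.
\]
So it suffices to show $g(x)^{\top}(\tilde x - x)\le -\frac{1}{\epsilon}\|\tilde x - x\|^2$, and then sum the two bounds to obtain the claim.

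\medskip\noindent The gradient term splits into three pieces according to $A_l(x)$, $A_u(x)$, $N(x)$. On $N(x)$ the contribution vanishes since $\tilde x_i = x_i$. The core step, which I expect to be the main obstacle, is the pointwise bound on $A_l(x)\cup A_u(x)$. Consider $i\in A_l(x)$: then $g_i(x)>0$, $\tilde x_i - x_i = l_i - x_i\le 0$, and from~\eqref{lambdafunc} the factor $\frac{(u_i-x_i)^2}{(l_i-x_i)^2+(u_i-x_i)^2}$ lies in $[0,1]$, so $\lambda_i(x)\le g_i(x)$. Combining this with the defining inequality $x_i-l_i\le \epsilon\lambda_i(x)$ of $A_l(x)$ yields $x_i - l_i\le \epsilon\, g_i(x)$, and therefore
\[
g_i(x)(\tilde x_i - x_i)=-g_i(x)(x_i-l_i)\le -\tfrac{1}{\epsilon}(x_i-l_i)^2=-\tfrac{1}{\epsilon}(\tilde x_i - x_i)^2.
\]
The symmetric argument on $A_u(x)$, using $g_i(x)<0$, $\mu_i(x)\le -g_i(x)$ from~\eqref{mufunc}, and $u_i - x_i\le \epsilon\mu_i(x)$, gives $g_i(x)(\tilde x_i - x_i)\le -\frac{1}{\epsilon}(\tilde x_i - x_i)^2$ as well.

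\medskip\noindent Summing these pointwise bounds over $i\in A_l(x)\cup A_u(x)\cup N(x)$ produces $g(x)^{\top}(\tilde x - x)\le -\frac{1}{\epsilon}\|\tilde x - x\|^2$, and plugging this together with the Hessian estimate into the Taylor expansion yields
\[
f(\tilde x)-f(x)\le -\tfrac{1}{\epsilon}\|\tilde x - x\|^2+\tfrac{1}{2\epsilon}\|\tilde x - x\|^2=-\tfrac{1}{2\epsilon}\|\tilde x - x\|^2,
\]
as required. The delicate point is really the inequality $\lambda_i(x)\le g_i(x)$ (resp.\ $\mu_i(x)\le -g_i(x)$), which is what couples the size of the multiplier approximations to the gradient and makes the $\epsilon$-box around the bound translate into a lower bound on $|g_i(x)|$; all other steps are straightforward bookkeeping.
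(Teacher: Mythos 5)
Your proposal is correct and follows essentially the same route as the paper's proof: a second-order Taylor expansion, the bound $\bar\lambda\le 1/\epsilon$ from Assumption~\ref{ass:eps} for the curvature term, and the pointwise estimate $g_i(x)(\tilde x_i-x_i)\le-\tfrac{1}{\epsilon}(\tilde x_i-x_i)^2$ on $A_l(x)\cup A_u(x)$ derived from the membership inequalities and the explicit multiplier functions. The only (cosmetic) difference is that you discard the factor $\tfrac{(l_i-x_i)^2+(u_i-x_i)^2}{(u_i-x_i)^2}\ge 1$ at the multiplier stage via $\lambda_i(x)\le g_i(x)$, whereas the paper carries it through and drops it one line later.
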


\begin{proof}
See Appendix \ref{app:as}.
\end{proof}

As we already highlighted in the Introduction, Proposition \ref{prop1} is a non-trivial extension of similar results already proved in the literature.

In particular, here we deal with problems having a general non-convex objective function, while in \cite{desantis:2016,buchheim:2015}, where a similar analysis was carried out,
the authors only considered convex quadratic optimization problems.

\subsection{Descent Property of the Non-active Set}
In this subsection, we show that, thanks to the theoretical properties of the active-set estimate, a sufficient decrease in the objective function can also be obtained
by suitably choosing a direction in the subspace of the non-active variables only. Let us consider a search direction satisfying the following conditions:
\begin{gather}
d_i = 0, \quad \forall i\in A_l(x) \cup A_u(x), \label{conddir1} \\
d_{N(x)}^Tg_{N(x)}(x) \le -\sigma_1 \|g_{N(x)}(x)\|^{2}, \label{conddir2} \\
\|d_{N(x)}\| \le \sigma_{2}\|g_{N(x)}(x)\|, \label{conddir3}
\end{gather}
where $\sigma_1, \sigma_2 > 0$. Condition \eqref{conddir1} ensures that the estimated active variables are not updated when moving along such a direction, while~\eqref{conddir2} and~\eqref{conddir3} imply that
$d$ is gradient-related with respect to only the estimated non-active variables.

Given a direction $d$ satisfying~\eqref{conddir1}--\eqref{conddir3}, the following proposition shows that a sufficient decrease in the objective function
can be guaranteed by projecting suitable points obtained along $d$.

\begin{proposition}\label{prop:line_search}
Given $\bar x \in [l,u]$, let us assume that $N(\bar x) \ne \emptyset$ and that $g_{N(\bar x)}(\bar x) \ne 0$. Let $\gamma \in ]0,1[$. Then, there exists $\bar{\alpha}>0$ such that
\begin{equation}\label{prop_decr}
f(\bar x(\alpha))-f(\bar x)\le \gamma \alpha g(\bar x)^T d, \quad \forall \alpha\in ]0,\bar{\alpha}],
\end{equation}
where $\bar x(\alpha):=[\bar x+\alpha d]^{\sharp}$, and $d$ satisfies~\eqref{conddir1}--\eqref{conddir3} in $\bar x$.
\end{proposition}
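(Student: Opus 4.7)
My plan is to combine a first-order Taylor expansion of $f$ at $\bar x$ with a componentwise estimate relating the projected displacement $\bar x(\alpha) - \bar x$ to the raw step $\alpha d$, and then exploit strict negativity of $g(\bar x)^{T}d$ coming from~\eqref{conddir2}.

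The core technical step is to establish that, for all sufficiently small $\alpha>0$,
\[
g(\bar x)^{T}\bigl(\bar x(\alpha) - \bar x\bigr) \le \alpha\, g(\bar x)^{T} d.
\]
I would argue this componentwise. For $i \in A_l(\bar x) \cup A_u(\bar x)$, condition~\eqref{conddir1} gives $d_i = 0$, so $\bar x(\alpha)_i = \bar x_i$ and equality holds. For $i \in N(\bar x)$ with $\bar x_i \in (l_i,u_i)$, choosing $\alpha$ small enough keeps $\bar x_i + \alpha d_i$ inside $[l_i,u_i]$ and again $\bar x(\alpha)_i - \bar x_i = \alpha d_i$. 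The only delicate case is $i \in N(\bar x)$ with $\bar x_i$ already at a bound and $d_i$ pointing outward, where the projection clips. Here I would use the explicit multiplier formulas~\eqref{lambdafunc}--\eqref{mufunc}: at $\bar x_i = l_i$ one has $\lambda_i(\bar x) = g_i(\bar x)$, so $i \notin A_l(\bar x)$ forces $g_i(\bar x) \le 0$; symmetrically, at $\bar x_i = u_i$, $i \notin A_u(\bar x)$ forces $g_i(\bar x) \ge 0$. Combined with $\bar x(\alpha)_i - \bar x_i \ge \alpha d_i$ in the lower case (with $d_i<0$) and $\bar x(\alpha)_i - \bar x_i \le \alpha d_i$ in the upper case (with $d_i>0$), componentwise multiplication by $g_i(\bar x)$ gives $g_i(\bar x)\bigl(\bar x(\alpha)_i - \bar x_i\bigr) \le \alpha\, g_i(\bar x)\, d_i$, and summation yields the displayed inequality.

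The rest is standard. Using $f \in C^{2}$ together with nonexpansiveness of the projection, which gives $\|\bar x(\alpha) - \bar x\| \le \alpha\|d\|$, a first-order expansion produces
\[
f(\bar x(\alpha)) - f(\bar x) \;=\; g(\bar x)^{T}\bigl(\bar x(\alpha) - \bar x\bigr) + o(\alpha) \;\le\; \alpha\, g(\bar x)^{T} d + o(\alpha).
\]
From~\eqref{conddir2} and the hypothesis $g_{N(\bar x)}(\bar x) \neq 0$ we have $g(\bar x)^{T}d = g_{N(\bar x)}(\bar x)^{T}d_{N(\bar x)} \le -\sigma_{1}\|g_{N(\bar x)}(\bar x)\|^{2} < 0$, so $(1-\gamma)\,g(\bar x)^{T} d < 0$, and rearranging shows~\eqref{prop_decr} for all $\alpha\in (0,\bar\alpha]$ with $\bar\alpha$ small enough. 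The main obstacle is the sign analysis at clipped indices; it relies essentially on the identity $\lambda_i(\bar x) = g_i(\bar x)$ (resp.\ $\mu_i(\bar x) = -g_i(\bar x)$) at the corresponding bound, which is precisely what makes membership in $N(\bar x)$ compatible with an Armijo-type inequality along the projected path.
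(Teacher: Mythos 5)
Your proposal is correct and follows essentially the same route as the paper's proof: a componentwise analysis of the projected step showing $g(\bar x)^T(\bar x(\alpha)-\bar x)\le \alpha\, g(\bar x)^T d$ for small $\alpha$ (using that $i\in N(\bar x)$ with $\bar x_i$ at a bound forces the corresponding sign of $g_i(\bar x)$ via $\lambda_i(\bar x)=g_i(\bar x)$, resp.\ $\mu_i(\bar x)=-g_i(\bar x)$), combined with a Taylor/Lipschitz remainder bound and conditions~\eqref{conddir2}--\eqref{conddir3}. The only difference is cosmetic: the paper carries explicit constants ($L$, $M$, $\hat\alpha$) to produce a closed-form $\bar\alpha$, whereas you absorb the second-order term into $o(\alpha)$, which yields the same conclusion.
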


\begin{proof}
See Appendix~\ref{app:as}.
\end{proof}

\section{A New Active-Set Algorithm for Box-Constrained Problems}\label{sec:alg}
In this section, we describe a new algorithmic framework for box-constrained problems. Its distinguishing
feature is the presence of two different stages that enable us to separately handle active and non-active variables.

In Appendix~\ref{app:alg}, we report the formal scheme of our Active-Set Algorithm for Box-Constrained Problems (\ASABCP).
In the following, we only give a sketch of it, indicating with $f_R$
a reference value of the objective function that is updated throughout the procedure.
Different criteria were proposed in the literature to choose this value (see, e.g.,~\cite{zhang:2004}).
Here, we take $f_R$ as the maximum among the last $M$ function evaluations, where $M$ is a nonnegative parameter.

\begin{itemize}
\item At every iteration $k$, starting from the non-stationary point $x^k$, the algorithm fixes the
      estimated active variables at the corresponding bounds, thus producing the new point $\tilde x^k$.
      In particular, the sets
      \begin{equation}\label{active_sets}
      A_l^k:=A_l(x^k), \quad A_u^k:=A_u(x^k)\quad\mbox{ and } \quad N^k:=N(x^k)
      \end{equation}
      are computed and the point $\tilde x^k$ is produced by setting
      \[
      \tilde x^{k}_{A_l^k} := l_{A_l^k},\quad \tilde x^{k}_{A_u^k} := u_{A_u^k}\quad\mbox{ and }\quad\tilde x^{k}_{N^k} := x^k_{N^k}.
      \]
\item Afterward, a check is executed to verify if the new point $\tilde x^k$ is sufficiently close to $x^k$.
      If this is the case, the point $\tilde x^k$ is accepted.
      Otherwise, an objective function check is executed and two further cases are possible:
      if the objective function is lower than the reference value $f_R$, then we accept the point $\tilde x^k$;
      otherwise the algorithm sets $\tilde x^k$ by backtracking to the last good point (i.e., the point $\tilde x$ that produced the last $f_R$).
\item At this point, the active and non-active sets are updated considering the information related to $\tilde x^k$, i.e., we build
      \begin{equation}\label{active_sets_x_tilde}
      \tilde A_l^k:=A_l(\tilde x^k), \quad \tilde A_u^k:=A_u(\tilde x^k)\quad\mbox{ and } \quad \tilde N^k:=N(\tilde x^k).
      \end{equation}
      A search direction $d^k$ is then computed: we set $d_{\tilde A^k}^k :=0 $, with \mbox{$\tilde A^k := \tilde A_l^k \cup \tilde A_u^k$,}
      and calculate $d^{k}_{\tilde N^k}$ by means of a modified truncated-Newton step
      (see, e.g.,~\cite{dembo:1983} for further details on truncated-Newton approaches).
\item Once $d^k$ is computed, a non-monotone stabilization strategy, inspired by the one proposed in~\cite{grippo:1991},
      is used to generate the new iterate.
      In particular, the algorithm first checks if $\|d^k\|$ is sufficiently small. If this is the case, the unitary stepsize
      is accepted, and we set
      \[
      x^{k+1} := [\tilde x^k + d^k]^{\sharp}
      \]
      without computing the related objective function value and start a new iteration.\\
      Otherwise, an objective function check is executed and two further cases are possible:
      if the objective function is greater than or equal to the reference value $f_R$,
      then we backtrack to the last good point and take the related search direction; otherwise we continue with the current point.
      Finally, a non-monotone line search is performed in order to get a stepsize $\alpha^k$ and generate
      \[
      x^{k+1} := [\tilde x^k+\alpha^k d^k]^{\sharp}.
      \]
\item After a prefixed number of iterations without calculating the objective function, a check is executed to verify if the objective
      function is lower than the reference value $f_R$. If this is not the case, a backtracking and a non-monotone line search are executed.
\end{itemize}

The \emph{non-monotone line search} used in the algorithm is the same as the one described in, e.g., \cite{desantis:2012}.
It sets $\alpha^k := \delta^{\nu}$, where $\nu$ is the smallest nonnegative integer for which
\begin{equation}\label{suffdec}
f([\tilde x^k+\delta^{\nu} d^k]^{\sharp}) \le  f_R + \gamma \delta^{\nu} g(\tilde x^k)^T d^k,
\end{equation}
with $\delta\in ]0,1[$ and $\gamma\in ]0,\frac 1 2[$.

\begin{remark}
From Proposition~\ref{prop:stationary_point2} and~\ref{prop:stationary_point3},
it follows that \ASABCP\ is well defined, in the sense that at the $k$-th iteration it produces a new point $x^{k+1} \ne x^k$ iif $x^k$ is non-stationary.
\end{remark}

Hereinafter, we indicate the active-set estimates in $x^k$ and $\tilde x^k$ with the notation used in~\eqref{active_sets} and in~\eqref{active_sets_x_tilde}, respectively.

Now, we state the main theoretical result ensuring the global convergence of \ASABCP.
\begin{theorem}\label{th:glob_conv}
Let Assumption~\ref{ass:eps} hold. Then, \ASABCP\ either produces a stationary point for problem~\eqref{prob} in a finite number of iterations,
or produces an infinite sequence $\{x^k\}$ and every limit point $x^*$ of the sequence is a stationary point for problem~\eqref{prob}.
\end{theorem}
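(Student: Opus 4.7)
The plan is to follow the non-monotone stabilization analysis of Grippo, Lampariello, and Lucidi (as adapted in \cite{desantis:2012}), suitably modified to account for the two-stage structure and the proximity/backtracking mechanisms. First I would dispense with the finite case: if \ASABCP\ halts at iteration $k$, then by the Remark preceding the theorem $x^k$ is stationary. So suppose an infinite sequence $\{x^k\}$ is generated. Since $\{x^k\} \subset [l,u]$, every subsequence admits a limit point $x^* \in [l,u]$, and it suffices to show (via Propositions~\ref{prop:stationary_point2} and~\ref{prop:stationary_point3}) that at $x^*$ one has simultaneously $g_i(x^*) = 0$ for all $i \in N(x^*)$, and no component $i \in A_l(x^*) \cup A_u(x^*)$ with $x^*_i$ strictly interior to $[l_i,u_i]$.

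Next I would set up the monotone-subsequence device: define $\ell(k) \in \{\max(0,k-M),\dots,k\}$ to be an index at which $f$ attains its maximum over that window, so that at iteration $k$ the reference value $f_R$ equals $f(x^{\ell(k)})$. Because every step of \ASABCP\ either (i) accepts $\tilde x^k$ after the proximity check, (ii) accepts it because $f(\tilde x^k) < f_R$, (iii) enforces the Armijo-type condition \eqref{suffdec}, or (iv) backtracks to an earlier good point, a straightforward induction shows that $\{f(x^{\ell(k)})\}$ is non-increasing. By compactness this subsequence has a limit, so $f(x^{\ell(k)+1}) - f(x^{\ell(k)}) \to 0$.

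I would then exploit the two descent results to convert the vanishing decrement into vanishing of the geometric quantities of interest. Whenever the active stage is executed, Proposition~\ref{prop1} gives $f(\tilde x^k) - f(x^k) \le -\tfrac{1}{2\epsilon}\|x^k - \tilde x^k\|^2$; propagating this through the reference-value mechanism forces $\|x^k - \tilde x^k\| \to 0$ along the subsequence induced by $\ell(k)$. For the non-active stage, \eqref{suffdec} combined with \eqref{conddir2} yields $\alpha^k \|g_{\tilde N^k}(\tilde x^k)\|^2 \to 0$; to conclude $\|g_{\tilde N^k}(\tilde x^k)\| \to 0$, I would argue in the Zoutendijk style used inside the proof of Proposition~\ref{prop:line_search}, ruling out the scenario $\alpha^k \to 0$ with the gradient bounded away from zero by showing that the backtracked trial stepsize $\alpha^k/\delta$ must eventually satisfy the sufficient-decrease condition, a contradiction. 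Passing to a convergent subsequence, continuity of $g$, $\lambda(\cdot)$, and $\mu(\cdot)$, together with the inclusion properties of the active-set estimate from Proposition~\ref{prop:estim}, let me read off both parts of the stationarity characterization and invoke Proposition~\ref{prop:stationary_point3}.

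The main obstacle I expect is the bookkeeping around the proximity check and the ``backtrack to last good point'' branch of the algorithm. One must verify that these features are compatible with the reference-value telescoping, so that the subsequence $\{f(x^{\ell(k)})\}$ indeed decreases; in particular, the backtracks must be shown to always land on points that already contributed to some earlier $f_R$, so they cannot inflate the reference value. Carefully tracking which iterations enter which branch, and demonstrating that infinitely often a ``productive'' step (one to which either the active or the non-active descent inequality applies) takes place, is where I anticipate the proof will require the most care.
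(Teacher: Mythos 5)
Your overall strategy is the same as the paper's: handle the finite case via the Remark, run the Grippo--Lampariello--Lucidi non-monotone stabilization bookkeeping to get the function values and the displacements $\|x^k-\tilde x^k\|$, $\|x^{k+1}-\tilde x^k\|$ to vanish (using Proposition~\ref{prop1} for the first stage and the Armijo condition for the second), and then kill the scenario $\alpha^k\to 0$ with $g(\tilde x^k)^Td^k$ bounded away from zero by examining the rejected trial step $\alpha^k/\delta$. That is exactly the paper's Lemmas~\ref{lemma_nm1}--\ref{lemma_nm3} and Theorem~\ref{th:lim_dir_der}, and you correctly anticipate the two delicate points (the backtracking branches must be compatible with the reference-value telescoping, and one must show a productive step occurs within a bounded number of iterations --- the paper's Lemma~\ref{lemma_nm2} does the latter by noting that otherwise infinitely many distinct points differing only in bound-valued components would be generated).

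There is, however, one step in your endgame that would fail as written: you propose to invoke Proposition~\ref{prop:estim} to transfer the active-set information from the iterates to the limit point and then apply Proposition~\ref{prop:stationary_point3}. Proposition~\ref{prop:estim} has as its \emph{hypothesis} that $x^*$ is a KKT point, which is precisely what you are trying to prove, so this is circular; and without it you have no way to relate the estimates $A_l(x^k)$, $A_u(x^k)$, $N(x^k)$ along the sequence to $A_l(x^*)$, $A_u(x^*)$, $N(x^*)$, since the estimates \eqref{Al}--\eqref{Au} are not continuous in $x$ (the strict sign conditions on $g_i$ and the thresholds $\epsilon\lambda_i$, $\epsilon\mu_i$ can flip in the limit). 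The paper avoids this entirely: it extracts a subsequence along which the estimates are constant sets $\hat A_l$, $\hat A_u$, $\hat N$ and verifies the optimality conditions \eqref{opt1}--\eqref{opt3} component by component --- for $i\in\hat A_l$ one has $\tilde x^k_i=l_i$ and $g_i(x^k)>0$, so $x^*_i=l_i$ and $g_i(x^*)\ge 0$; symmetrically for $\hat A_u$; and for $i\in\hat N$ the directional-derivative limit together with \eqref{conddir2} gives $g_{\hat N}(x^*)=0$ --- never referring to the estimate evaluated at $x^*$. You should replace your final step with this component-wise argument. A secondary caution: your sliding-window index $\ell(k)$ over the last $M$ \emph{iterations} does not match the algorithm, whose reference value is the maximum over the last $M$ \emph{function evaluations} (many iterations compute no function value at all); and in the Zoutendijk step the trial point is $[\tilde x^k+(\alpha^k/\delta)d^k]^{\sharp}$, so the mean-value expansion picks up a projection correction term whose sign must be controlled --- this is the technically heaviest part of the paper's Theorem~\ref{th:lim_dir_der} and is glossed over in your sketch.
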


\begin{proof}
See Appendix~\ref{app:conv}.
\end{proof}

Finally, under standard additional assumptions, superlinear convergence of the method can be proved.
\begin{theorem}\label{th:superlin}
Assume that $\{x^k\}$ is a sequence generated by \ASABCP\ converging to a point $x^*$ satisfying the strict complementarity condition
and such that $H_{N^* N^*} (x^*) \succ 0$, where $N^* := \{i \colon l_i < x^*_i < u_i\}$.
Assume that the sequence $\{d^k\}$ of directions satisfies the following condition:
\begin{equation}\label{tronc}
\lim_{k \to \infty} \frac{\| H_{\tilde N^k \tilde N^k}(\tilde x^k) d^k_{\tilde N^k} + g_{\tilde N^k}(\tilde x^k) \|}{\|g_{\tilde N^k}(\tilde x^k) \|} = 0.
\end{equation}
Then, the sequence $\{x^k\}$ converges to $x^*$ superlinearly.
\end{theorem}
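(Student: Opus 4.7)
The plan is to show that, for $k$ sufficiently large, \ASABCP\ reduces to a projected truncated-Newton iteration in the subspace $N^*$ of the non-active variables, after which assumption~\eqref{tronc} yields the claimed superlinear rate via the classical Dembo--Eisenstat--Steihaug argument.

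The first step is to apply Proposition~\ref{prop:estim}: since $x^k \to x^*$ and strict complementarity holds at $x^*$, for all $k$ large enough both the estimate at $x^k$ and the estimate at $\tilde x^k$ coincide with the true active set, so
\[
A_l^k = \tilde A_l^k = \{i \colon x^*_i = l_i\}, \quad A_u^k = \tilde A_u^k = \{i \colon x^*_i = u_i\}, \quad N^k = \tilde N^k = N^*.
\]
Writing $A^* := \{1,\dots,n\} \setminus N^*$, the first-stage fixing produces $\tilde x^k$ with $\tilde x^k_{A^*} = x^*_{A^*}$ and $\tilde x^k_{N^*} = x^k_{N^*}$, so $\|\tilde x^k - x^k\| = \|x^k_{A^*} - x^*_{A^*}\| \to 0$. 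This in particular ensures that the proximity check of the first stage accepts $\tilde x^k$ without reverting to the last good point.

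The second step concerns the subspace iteration. By continuity $H_{\tilde N^k \tilde N^k}(\tilde x^k) \to H_{N^* N^*}(x^*) \succ 0$, so this submatrix is uniformly positive definite for $k$ large. Combined with~\eqref{tronc}, this is precisely the inexact-Newton setting on $N^*$. Moreover, since $d^k_{\tilde A^k} = 0$ by construction, $l_i < x^*_i < u_i$ for $i \in N^*$, $\tilde x^k_{N^*} \to x^*_{N^*}$ and $d^k_{N^*} \to 0$, the projection $[\tilde x^k + \alpha d^k]^\sharp$ is inactive on $N^*$ and preserves the bound values on $A^*$ for every $\alpha \in [0,1]$. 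I would then verify that the unit stepsize is eventually accepted: a second-order Taylor expansion of $f$ at $\tilde x^k$ along $d^k$, together with~\eqref{tronc} and positive definiteness, gives
\[
f(\tilde x^k + d^k) - f(\tilde x^k) = g(\tilde x^k)^T d^k + \tfrac{1}{2}(d^k)^T H_{\tilde N^k \tilde N^k}(\tilde x^k) d^k + o(\|d^k\|^2) \le \gamma\, g(\tilde x^k)^T d^k
\]
for $k$ large (using $\gamma < 1/2$ and the fact that~\eqref{tronc} forces $g(\tilde x^k)^T d^k = -(d^k)^T H_{\tilde N^k \tilde N^k}(\tilde x^k) d^k + o(\|d^k\|^2)$); since $f_R \ge f(\tilde x^k)$, the non-monotone test~\eqref{suffdec} is satisfied with $\nu = 0$, and alternatively the $\|d^k\|$-small branch of the algorithm accepts the unit step directly.

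With $\alpha^k \equiv 1$ and the projection inactive on $N^*$, the iteration on $N^*$ is a pure inexact-Newton iteration on a locally strongly convex problem, so the standard Dembo--Eisenstat--Steihaug estimate yields $\|\tilde x^k_{N^*} + d^k_{N^*} - x^*_{N^*}\| = o(\|\tilde x^k_{N^*} - x^*_{N^*}\|)$, which combined with $\tilde x^k_{A^*} = x^*_{A^*}$ and $\tilde x^k_{N^*} = x^k_{N^*}$ gives $\|x^{k+1} - x^*\| = o(\|x^k - x^*\|)$. The main obstacle I anticipate is the bookkeeping needed to rule out, for $k$ large, every algorithmic safeguard that might prevent this reduction: the proximity check of the first stage, the reference-value check that could force backtracking to the last good point in the second stage, and the periodic line search triggered after a prefixed number of iterations without a function evaluation. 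Once each of these branches is shown to be inactive in a neighborhood of $x^*$, the argument collapses to a textbook inexact-Newton proof in the subspace identified by the active-set estimate.
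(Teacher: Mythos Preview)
Your approach is the same as the paper's: invoke Proposition~\ref{prop:estim} under strict complementarity to freeze the active-set estimates at the true active set, reduce the iteration to a truncated-Newton step on $N^*$, and appeal to the standard inexact-Newton (Dembo--Eisenstat--Steihaug) rate. You are actually more thorough than the paper on the second stage (unit-step acceptance, inactivity of the projection, the various safeguards); the paper simply writes that ``the assertion follows from standard results on unconstrained minimization'' after noting that the minimization is eventually restricted to $N(\tilde x^k)=N^*$ and invoking Lemma~\ref{lemma:conv1} to ensure $N^*\ne\emptyset$.

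One step in your outline does need sharpening. You argue that $\|\tilde x^k - x^k\| = \|x^k_{A^*} - x^*_{A^*}\| \to 0$ and conclude that the proximity check at Step~4 accepts $\tilde x^k$. But the threshold $\tilde\Delta$ is itself driven to zero geometrically (it is multiplied by $\beta$ every time the test passes), so mere convergence of $\|\tilde x^k - x^k\|$ to zero does not guarantee $\|\tilde x^k - x^k\|\le\tilde\Delta$ eventually. The paper's remedy is to observe that in fact $\tilde x^k = x^k$ \emph{exactly} for $k$ large: once the estimate at iteration $k-1$ is correct, $\tilde x^{k-1}_{A^*}$ is at the bounds and $d^{k-1}_{A^*}=0$, hence $x^k_{A^*}=[\tilde x^{k-1}_{A^*}]^\sharp$ is already at the bounds and the first stage leaves $x^k$ unchanged. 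With $\|\tilde x^k-x^k\|=0$ the proximity check passes trivially, and your subsequent subspace analysis goes through as written.
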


\begin{proof}
See Appendix~\ref{app:conv}.
\end{proof}

\section{Numerical Experience}\label{sec:numres}
In this section, we describe the details of our computational experience.

In Subsection~\ref{compOther}, we compare \ASABCP\ with the following codes:
\begin{itemize}
\item \NMBC~\cite{desantis:2012} (in particular, we considered the version named \NMBC$_2$ in~\cite{desantis:2012});
\item \ALGENCAN~\cite{birgin:2002}: an active-set method using spectral projected gradient steps for leaving faces,
downloaded from the TANGO web page (\url{http://www.ime.usp.br/~egbirgin/tango});
\item \LANCELOT~\cite{gould:2003}: a Newton method based on a trust-region strategy, downloaded from the GALAHAD web page (\url{http://www.galahad.rl.ac.uk}).
\end{itemize}

All computations have been run on an Intel Xeon(R), CPU E5-1650 v2 3.50 GHz.
The test set consisted of $140$ bound-constrained problems from the CUTEst collection~\cite{gould:2015}, with dimension up to $10^5$.
The stopping condition for all codes was
\[
\|x - [x - g(x)]^\sharp\|_\infty< 10^{-5},
\]
where $\|\cdot\|_\infty$ denotes the sup-norm of a vector.

In order to compare the performances of the algorithms, we make use of the performance profiles proposed in~\cite{dolan:2002}.

Following the analysis suggested in~\cite{birgin:2012}, we preliminarily checked whether
the codes find different stationary points: the comparison is
thus restricted to problems for which all codes find the same stationary point (with a tolerance of $10^{-3}$).
Furthermore, we do not consider in the analysis those problems for which all methods find a stationary point in less
than $1$ second.

In \ASABCP, we set the algorithm parameters to the following values: \mbox{$Z := 20$} and \mbox{$M := 99$}
(so that the last $100$ objective function values are included in the computation of the reference value).

In running the other methods considered in the comparisons, default values were used for all parameters
(but those related to the stopping condition).


C++ and Fortran 90 implementations (with CUTEst interface) of \ASABCP, together with details related to the experiments and the implementation,
can be found at the following web page:
\url{https://sites.google.com/a/dis.uniroma1.it/asa-bcp}.

\subsection{Comparison on CUTEst Problems}\label{compOther}
In this subsection, we first compare \ASABCP\ with the \NMBC\ algorithm presented in~\cite{desantis:2012}.
Then, we report the comparison of \ASABCP\ with other two solvers for bound-constrained problems,
namely \ALGENCAN~\cite{birgin:2002} and \LANCELOT~\cite{gould:2003}.
All the codes are implemented in Fortran 90.

Recalling how we selected the relevant test problems,
the analysis was restricted to $43$ problems for the comparison between \ASABCP\ and \NMBC,
and to $62$ problems for the comparison between \ASABCP, \ALGENCAN\ and \LANCELOT.

In particular, in the comparison between \ASABCP\ and \NMBC, $96$ problems were discarded
because they were solved in less than $1$ second by both algorithms. A further problem (namely \textit{SCOND1LS} with $5002$ variables)
was removed because \ASABCP\ and \NMBC\ found two different stationary
points (\NMBC\ found the worst one).

In the comparison between \ASABCP, \ALGENCAN\ and \LANCELOT, $75$ problems were discarded
because they were solved in less than $1$ second by all the considered algorithms.
Other $3$ problems were removed as the methods stopped at different
stationary points. Namely, \textit{NCVXBQP3} with $10^5$ variables,
\textit{POWELLBC} with $10^3$ variables and \textit{SCOND1LS} with
$5002$ variables were discarded in our comparison. The worst stationary points were found by \ASABCP,
\LANCELOT\ and \ASABCP, respectively.

In Figure~\ref{fig:perf_plots_ASABCP_NMBC}, we report the performance profiles of \ASABCP\ and \NMBC.
These profiles show that \ASABCP\ outperforms \NMBC\ in terms of CPU time,
number of objective function evaluations and number of conjugate gradient iterations.
This confirms the effectiveness of our two-stage approach when compared to the \NMBC\ algorithm.

\begin{figure}
\centering
\includegraphics[scale=0.6, trim=1.2cm 0cm 1.2cm 0]{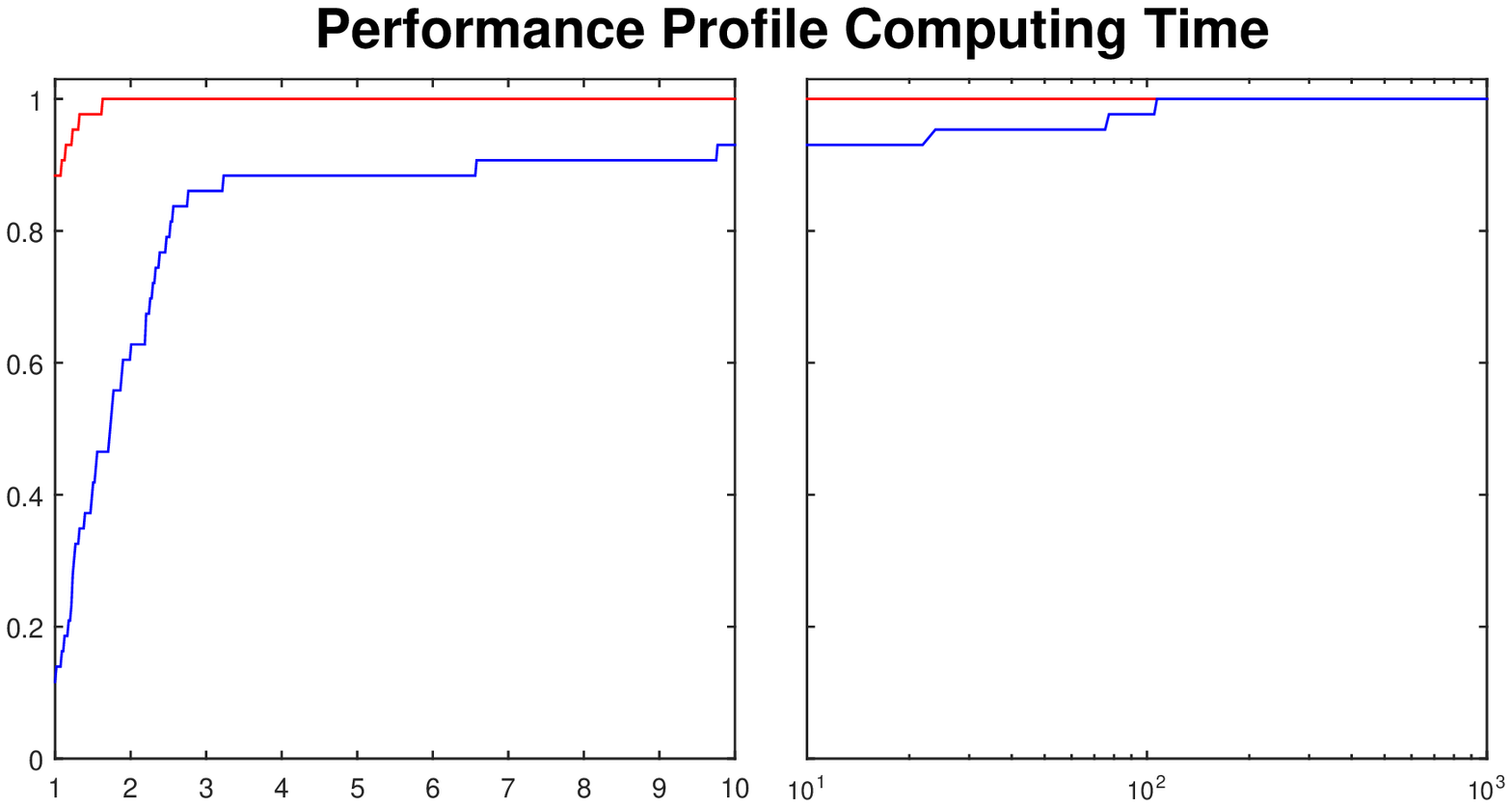}
\includegraphics[scale=0.6, trim=1.2cm 0cm 1.2cm 0]{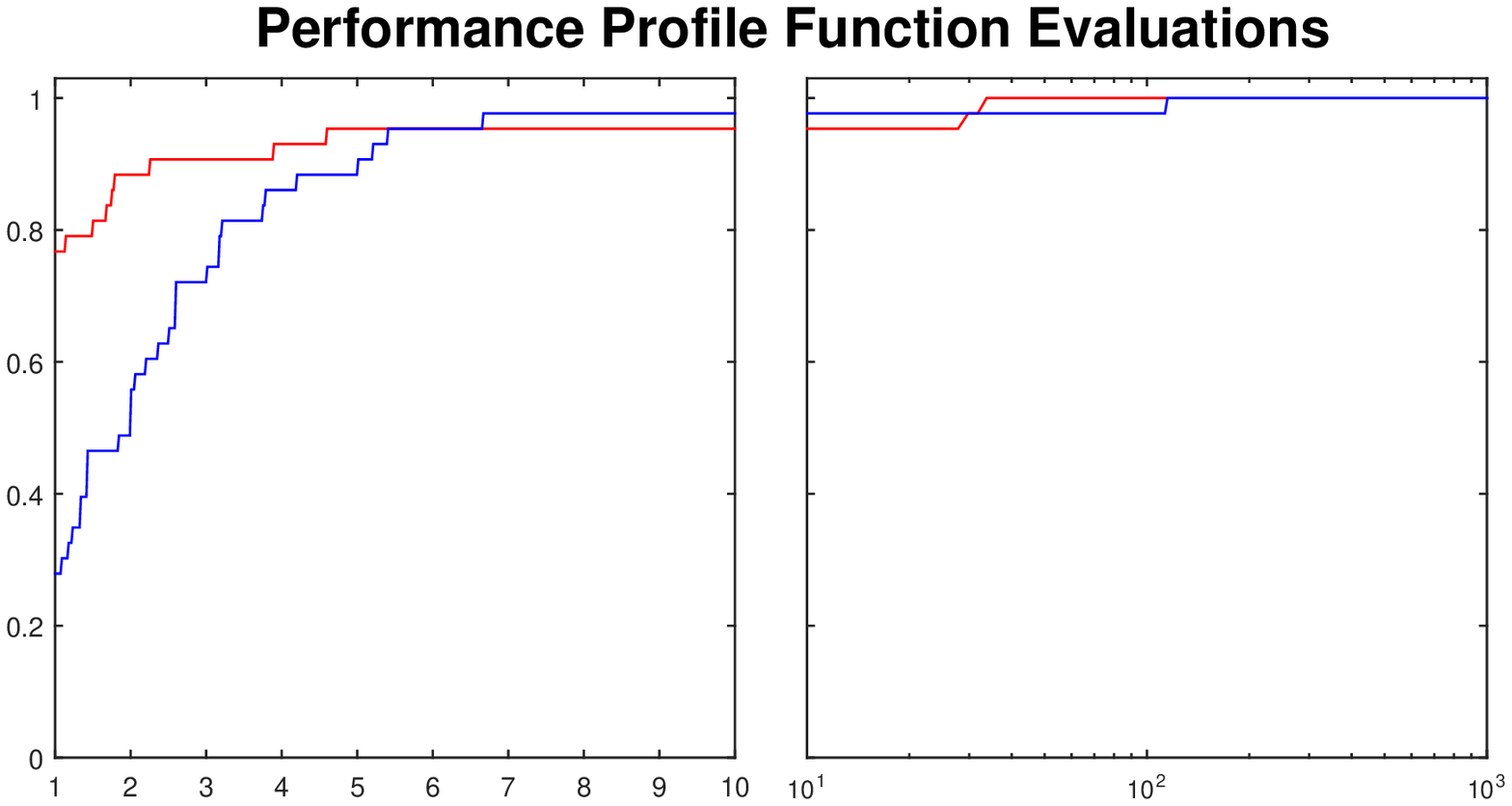}
\includegraphics[scale=0.6, trim=1.2cm 0cm 1.2cm 0]{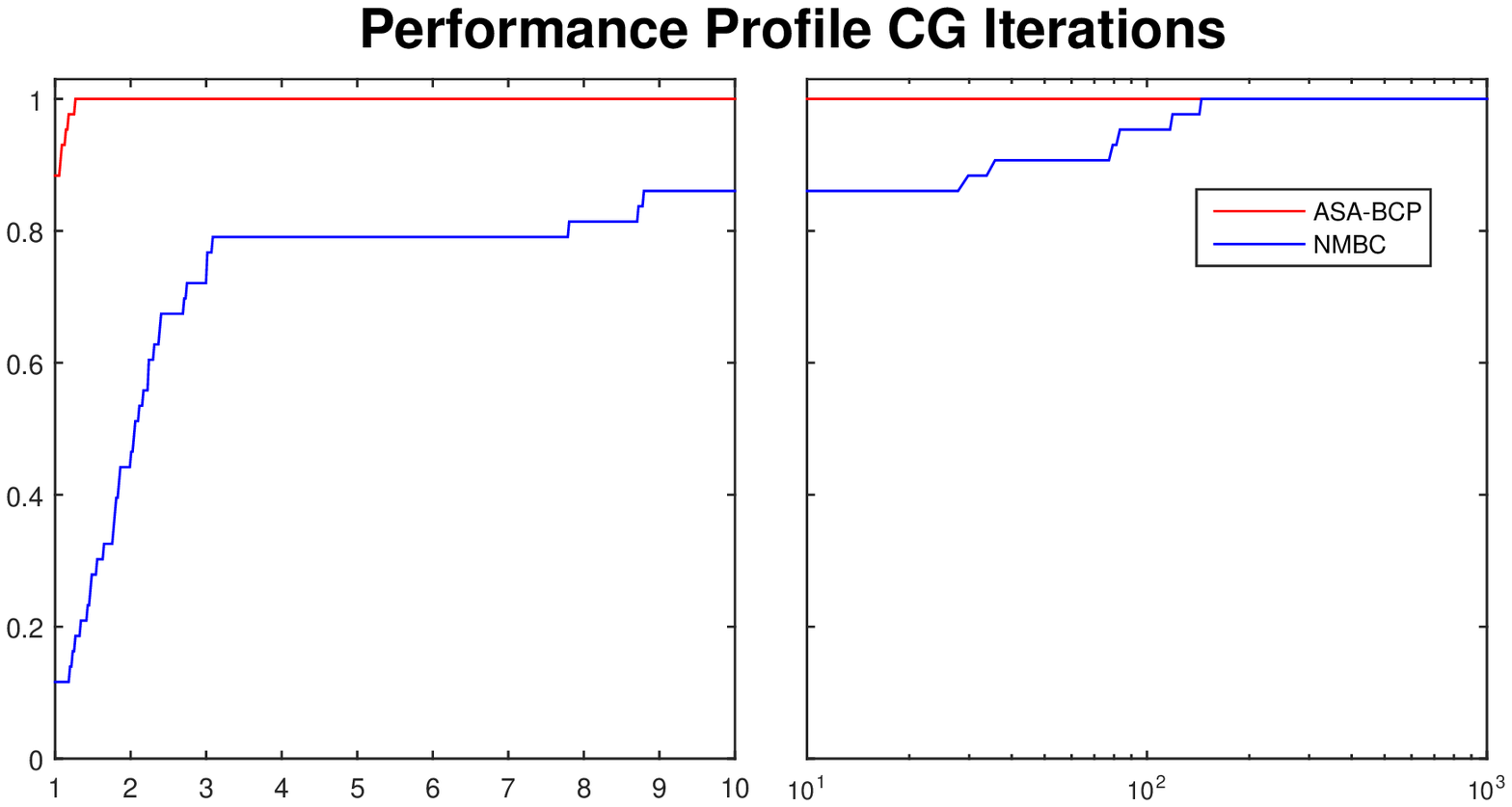}
\caption{Comparison between \ASABCP\ and \NMBC: performance profiles on CPU time,
number of objective function evaluations and number of conjugate gradient iterations.
The $x$ axis is in linear scale in the left panel and in logarithmic scale
in the right panel.}
\label{fig:perf_plots_ASABCP_NMBC}
\end{figure}

\begin{figure}
\centering
\includegraphics[scale=0.6, trim=1.2cm 0cm 1.2cm 0]{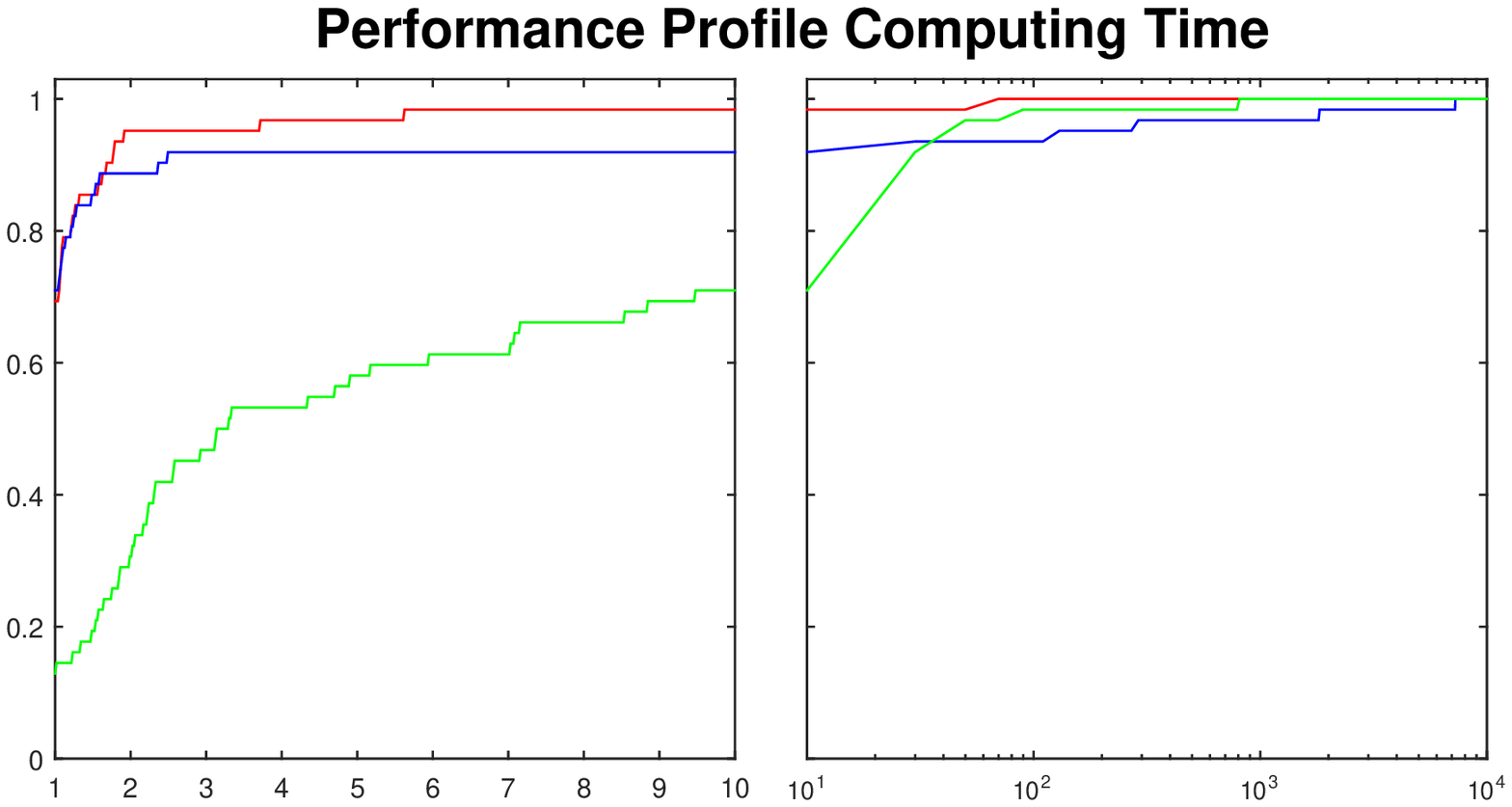}
\includegraphics[scale=0.6, trim=1.2cm 0cm 1.2cm 0]{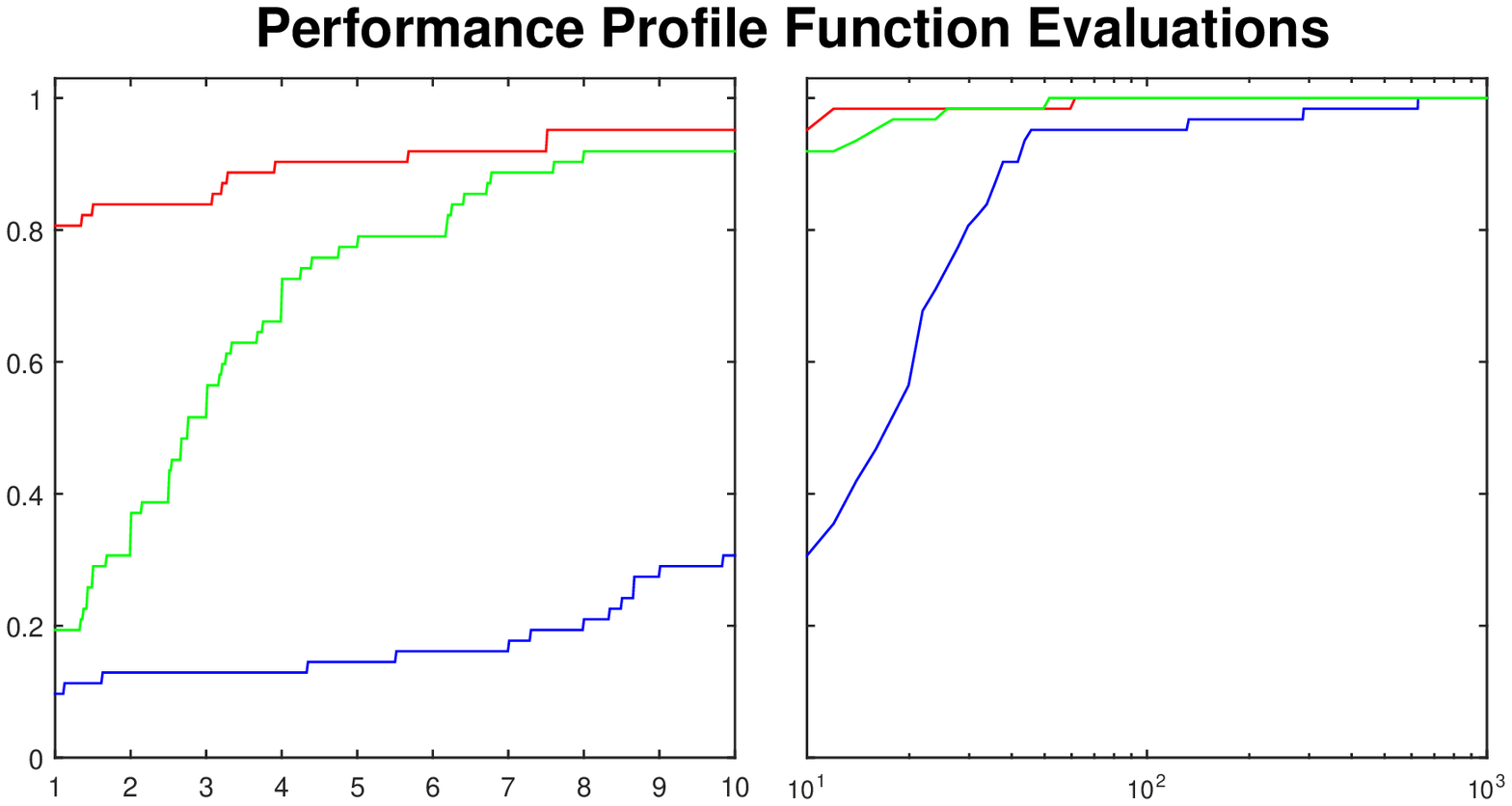}
\includegraphics[scale=0.6, trim=1.2cm 0cm 1.2cm 0]{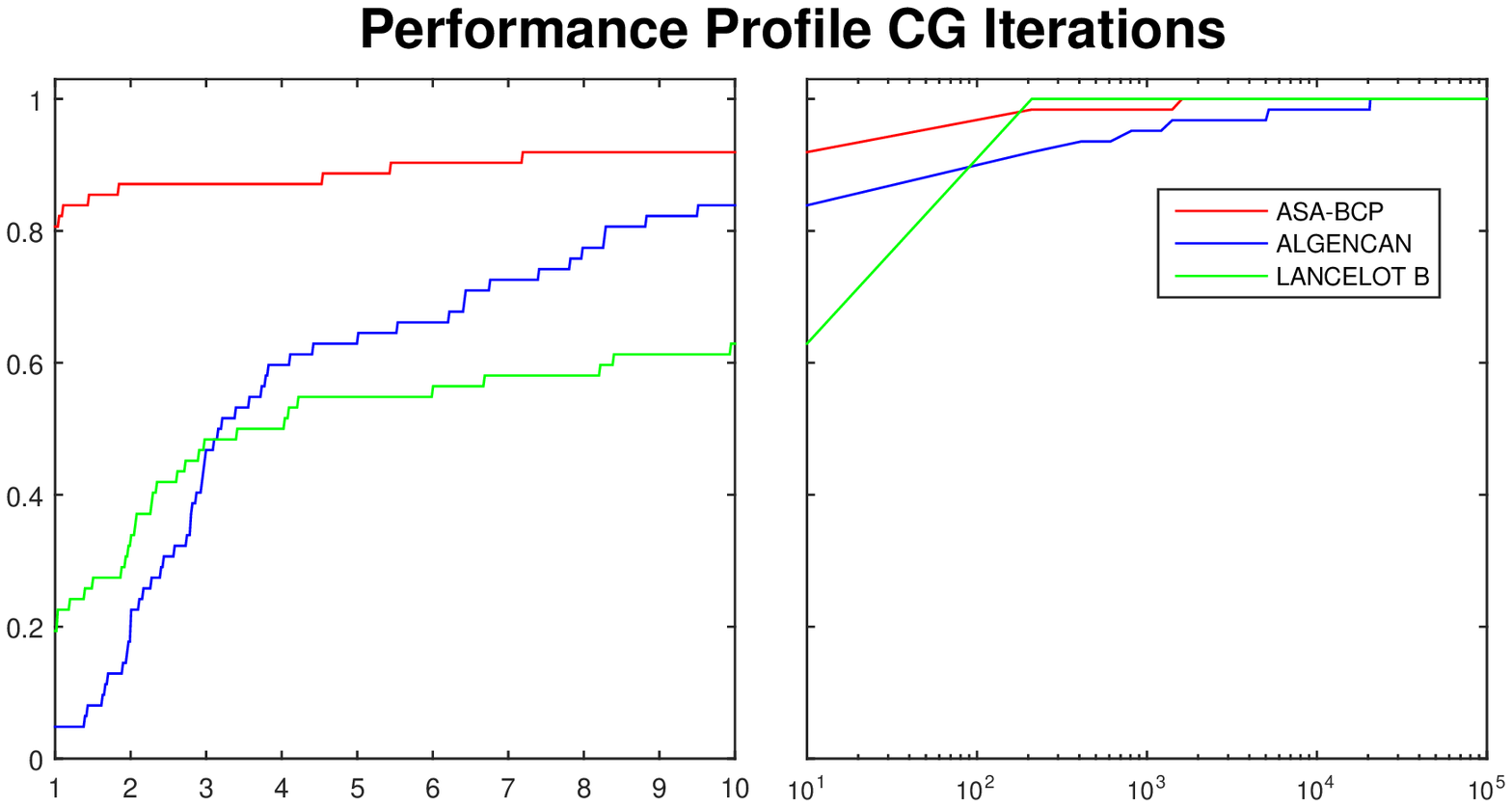}
\caption{Comparison among \ASABCP, \ALGENCAN\ and \LANCELOT: performance profiles on CPU time,
number of objective function evaluations and number of conjugate gradient iterations.
The $x$ axis is in linear scale in the left panel and in logarithmic scale in the right panel.}
\label{fig:perf_plots_ASABCP_LANCELOT_ALGENCAN}
\end{figure}

These results seem to confirm that on the one hand, computing the search direction only in the subspace of the non-active variables
guarantees some savings in terms of CPU time, and, on the other hand, getting rid of the Barzilai--Borwein step (used in \NMBC)
avoids the generation of badly scaled search directions.

In Figure~\ref{fig:perf_plots_ASABCP_LANCELOT_ALGENCAN}, we show the performance profiles of \ASABCP, \ALGENCAN\ and \LANCELOT.
By taking a look at the performance profiles related to CPU time, we can easily see that \ASABCP\ and \ALGENCAN\ are comparable in terms of efficiency
and are both better than \LANCELOT. As regards robustness, we can see that \ASABCP\ outperforms both \ALGENCAN\ and \LANCELOT.
More specifically, when $\tau$ is equal to $2$, \ASABCP\ solves about $95\%$ of the problems, while \ALGENCAN\ and \LANCELOT\ respectively, solve
about $90\%$ and $30\%$ of the problems.
Furthermore, \ASABCP\ is able to solve all the problems when $\tau$ is about $70$, while \ALGENCAN\ and \LANCELOT\ get to solve all the problems
for significantly larger values of $\tau$.

For what concerns the number of objective function evaluations, \ASABCP\ is the best in terms of efficiency
and is competitive with \LANCELOT\ in terms of robustness.
In particular, when $\tau$ is equal to $2$, \ASABCP\ solves about $85\%$ of the problems, while \ALGENCAN\ and \LANCELOT\ respectively, solve
about $15\%$ and $30\%$ of the problems.
Moreover, \ASABCP\ and \LANCELOT\ solve all the problem when $\tau$ is about $60$ and $50$, respectively,
while \ALGENCAN\ gets to solve all the problems when $\tau$ is about $600$.

Finally, as regards the number of conjugate gradient iterations, \ASABCP\ outperforms the other two codes
in terms of efficiency, while \LANCELOT\ is the best in terms of robustness.
More in detail, when $\tau$ is equal to $2$, \ASABCP\ solves about $85\%$ of the problems, while \ALGENCAN\ and \LANCELOT\ respectively, solve
about $20\%$ and $35\%$ of the problems. \LANCELOT\ is able to solve all the problems when $\tau$ is about 200,
while \ASABCP\ and \ALGENCAN\ need larger values of~$\tau$.

\section{Conclusions}\label{sec:conc}
In this paper, a two-stage active-set algorithm for box-constrained nonlinear programming problems is devised.
In the first stage, we get a significant reduction in the objective function simply by setting to the bounds
the estimated active variables.
In the second stage, we employ a truncated-Newton direction computed in the subspace of the estimated non-active variables.
These two stages are inserted in a non-monotone framework and the convergence of the resulting algorithm \ASABCP\ is proved.
Experimental results show that our implementation of \ASABCP\ is competitive with other widely used codes for
bound-constrained minimization problems.


\section*{Appendices}

\begin{appendices}

\section{}\label{app:as}
\small \linespread{1}

\begin{proof}[Proof of Proposition~\ref{prop:stationary_point}]
Assume that $\bar{x}$ satisfies~\eqref{opt_cond_1}--\eqref{opt_cond_3}. First, we show that
\begin{gather}
\bar{x}_i = l_i, \quad \text{if } i \in A_l(\bar{x}), \label{prop:Al} \\
\bar{x}_i = u_i, \quad \text{if } i \in A_u(\bar{x}). \label{prop:Au}
\end{gather}
In order to prove~\eqref{prop:Al}, assume by contradiction that there exists an index $i \in A_l(\bar{x})$ such that
$l_i < \bar{x} \le l_i + \epsilon\lambda_i(\bar{x})$. It follows that $\lambda_i(\bar{x}) > 0$, and, from~\eqref{lambdafunc}, that $g_i(\bar x)>0$,
contradicting~\eqref{opt_cond_1}. Then,~\eqref{prop:Al} holds. The same reasoning applies to prove~\eqref{prop:Au}.\\
Recalling~\eqref{Al}, we have that $g_i(\bar x)>0$ for all $i \in A_l(\bar x)$. Combined with~\eqref{prop:Al}, it means that
$\bar x_i$ satisfies~\eqref{opt1} for all $i \in A_l(\bar x)$.
Similarly, since $g_i(\bar x)<0$ for all $i \in A_u(\bar x)$ and~\eqref{prop:Au} holds, then
$\bar x_i$ satisfies~\eqref{opt2} for all $i \in A_u(\bar x)$.\\
From~\eqref{opt_cond_3}, we also have that $\bar x_i$ satisfies
optimality conditions for all $i \in N(\bar x)$. Then, $\bar{x}$ is a stationary point.\\
Now, assume that $\bar{x}$ is a stationary point. First, we consider a generic index $i$ such that $\bar x_i = l_i$. For such an index,
from~\eqref{opt1} we get $g_i(\bar x)\ge0$. If $g_i(\bar x)>0$, then, from~\eqref{Al}, it follows that
$i \in A_l(\bar x)$ and~\eqref{opt_cond_1} is satisfied. Vice versa, if $g_i(\bar x)=0$, then we have that $i$ belongs to $N(\bar x)$, satisfying~\eqref{opt_cond_3}.
The same reasoning applies for a generic index $i$ such that $\bar x_i = u_i$.\\
Finally, for every index $i$ such that $l_i < \bar x_i < u_i$, from~\eqref{opt3} we have that $g_i(\bar x) = 0$. Then, $\bar x$ satisfies~\eqref{opt_cond_1}--\eqref{opt_cond_3}. 
\end{proof}

\begin{proof}[Proof of Proposition~\ref{prop:stationary_point2}]
Assume that~\eqref{active_set_empty} is verified. Namely,
\begin{gather*}
\bar x_i = l_i, \quad \text{if } i \in A_l(\bar x), \\
\bar x_i = u_i, \quad \text{if } i \in A_u(\bar x).
\end{gather*}
Recalling the definition of $A_l(\bar x)$ and $A_u(\bar x)$, the previous relations imply that~\eqref{opt_cond_1} and~\eqref{opt_cond_2} are verified.
Then, from Proposition~\ref{prop:stationary_point}, $\bar x$ is a stationary point if and only if $g_i(\bar x) = 0$ for all $i\in N(\bar x)$. 
\end{proof}

\begin{proof}[Proof of Proposition~\ref{prop:stationary_point3}]
Assume that condition~\eqref{stationary_free} is verified. If we have
\[\{i \in A_l(\bar{x})~\,~\colon \, \bar{x}_i>l_i\}~\cup~\{i \in A_u(\bar{x})~\,\colon \, \bar{x}_i<u_i\} = \emptyset,\]
from the definition of $A_l(\bar x)$ and $A_u(\bar x)$ it follows that
\begin{align*}
\bar x_i = l_i \, \text { and } \, g_i(\bar x) > 0, \quad & i \in A_l(\bar x), \\
\bar x_i = u_i \, \text { and } \, g_i(\bar x) < 0, \quad & i \in A_u(\bar x).
\end{align*}
Then, conditions~\eqref{opt1}--\eqref{opt3} are verified, and $\bar x$ is a stationary point.\\
Conversely, if $\bar x$ is a stationary point, we proceed by contradiction and assume that there exists $\bar x_i \in (l_i,u_i)$ such that $i \in A_l(\bar x) \cup A_u(\bar x)$.
From the definition of $A_l(\bar x)$ and $A_u(\bar x)$, it follows that $g_i(\bar x) \ne 0$, violating~\eqref{opt3} and thus contradicting the fact that $\bar x$ is a stationary
point.
\end{proof}

\begin{proof}[Proof of Proposition~\ref{prop1}]
By the second-order mean value theorem, we have
\[
f(\tilde x) = f(x) + g(x)^T(\tilde x-x) + \frac{1}{2}(\tilde x-x)^T H(z)(\tilde x-x),
\]
where $z = x + \xi(\tilde x-x)$ for a $\xi \in ]0,1[$.
Therefore,
\begin{equation}\label{proof1:1}
f(\tilde x) - f(x) \le g(x)^T(\tilde x-x) + \frac{1}{2}\bar{\lambda}\|x-\tilde x\|^2. \\
\end{equation}
Recalling the definition of $\tilde x$, we can also write
\begin{equation}\label{proof1:2}
g(x)^T(\tilde x-x) = \sum_{i \in A_l(x)} g_i(x)(l_i-x_i) + \sum_{i \in A_u(x)} g_i(x)(u_i-x_i).
\end{equation}
From the definitions of $A_l(x)$ and $A_u(x)$, and recalling~\eqref{lambdafunc} and~\eqref{mufunc}, we have
\begin{align*}
g_i(x) \ge \frac{(x_i-l_i)}{\epsilon} \left[ \frac{(l_i-x_i)^2 + (u_i-x_i)^2}{(u_i-x_i)^2} \right], \quad & i \in A_l(x), \\
g_i(x) \le \frac{(x_i-u_i)}{\epsilon} \left[ \frac{(l_i-x_i)^2 + (u_i-x_i)^2}{(l_i-x_i)^2} \right], \quad & i \in A_u(x),
\end{align*}
and we can write
\begin{gather*}
g_i(x)(l_i-x_i) \le -\frac{1}{\epsilon}(x_i-l_i)^2\left[\frac{(l_i-x_i)^2 + (u_i-x_i)^2}{(u_i-x_i)^2}\right] \le -\frac{1}{\epsilon}(l_i-x_i)^2, \quad i \in A_l(x), \\
g_i(x)(u_i-x_i) \le -\frac{1}{\epsilon}(u_i-x_i)^2\left[\frac{(l_i-x_i)^2 + (u_i-x_i)^2}{(u_i-x_i)^2}\right] \le -\frac{1}{\epsilon}(u_i-x_i)^2, \quad i \in A_u(x).
\end{gather*}
Hence, from~\eqref{proof1:2}, it follows that
\begin{equation}\label{proof1:7}
g(x)^T(\tilde x-x) \le -\frac{1}{\epsilon}\left[\sum_{i\in A_l(x)}(l_i-x_i)^2 + \sum_{i\in A_u(x)}(u_i-x_i)^2\right] = -\frac{1}{\epsilon}\|x-\tilde x\|^2.
\end{equation}
Finally, from~\eqref{proof1:1} and \eqref{proof1:7},
we have
\[
f(\tilde x)-f(x) \le \frac{1}{2}\left(\bar{\lambda}-\frac{1}{\epsilon}\right)\|x-\tilde x\|^2 - \frac{1}{2\epsilon}\|x-\tilde x\|^2 \le -\frac{1}{2\epsilon}\|x-\tilde x\|^2,
\]
where the last inequality follows from equation~\eqref{eq_ass:eps} in Assumption~\ref{ass:eps}.
\end{proof}

\begin{proof}[Proof of Proposition~\ref{prop:line_search}]
Since the gradient is Lipschitz continuous over $[l,u]$, there exists $L<\infty$ such that for all $s\in [0,1]$ and for all $\alpha \ge 0$:
\[
\|g(\bar x)-g(\bar x-s[\bar x-\bar x(\alpha)])\| \le s L \|\bar x-\bar x(\alpha)\|, \quad \forall \bar x \in [l,u].
\]
By the mean value theorem, we have:
\begin{align}\label{dim:mean_value}
f(\bar x(\alpha))-f(\bar x) & = g(\bar x)^T(\bar x(\alpha) -x) + \int_0^1\big(g(\bar x-s[\bar x - \bar x(\alpha)])-g(\bar x)\big)^T \big(\bar x(\alpha) - \bar x\big)\ ds\nonumber \\
                  & \le g(\bar x)^T(\bar x(\alpha) - \bar x) +\|\bar x(\alpha) - \bar x\|\int_{0}^{1}s L \|\bar x(\alpha) - \bar x\|\ ds\nonumber \\
                  & = g(\bar x)^T(\bar x(\alpha) - \bar x) + \frac{L}{2}\|\bar x(\alpha) - \bar x\|^{2}, \quad \forall \alpha \ge 0.
\end{align}
Moreover, as the gradient is continuous and the feasible set is compact, there exists $M > 0$ such that
\begin{equation}\label{major_g}
\|g(\bar x)\| \le M, \quad \forall \bar x \in [l,u].
\end{equation}
From~\eqref{conddir1}, \eqref{conddir3} and~\eqref{major_g}, we can write
\[
d_i \le \|d\| \le \sigma_2\|g(\bar x)\| \le \sigma_2 M, \quad \forall \bar x \in [l,u], \quad \forall i=1,\dots,n.
\]
Now, let us define $\theta_1,\dots,\theta_n$ as:
\[
\theta_i :=
\begin{cases}
\min\ \{\bar x_i - l_i, u_i - \bar x_i\} \quad \text{ if } l_i < \bar x_i < u_i, \\
u_i - l_i \qquad \qquad \qquad \quad\,\, \text{ otherwise},
\end{cases} \qquad i=1,\dots,n.
\]
We set
\[
\tilde \theta := \min_{i=1,\dots,n} \frac{\theta_i}{2},
\]
and define $\hat\alpha$ as follows:
\[
\hat\alpha := \frac{\tilde\theta}{\sigma_2 M}.
\]
In the following, we want to majorize the right-hand-side term of~\eqref{dim:mean_value}. First, we consider the term $g(\bar x)^T(\bar x(\alpha) - x)$. We distinguish three cases:
\begin{description}
\item[(i)] $i \in N(\bar x)$ such that $l_i < \bar x_i < u_i$. We distinguish two subcases:
    \begin{itemize}
    \item if $d_i \ge 0$:
        \[
        l_i < \bar x_i + \alpha d_i \le \bar x_i + \frac{\tilde\theta}{\sigma_2 M} d_i
            \le x_i + \tilde\theta < u_i, \qquad \forall \alpha \in ]0,\hat\alpha],
        \]
    \item else, if $d_i < 0$:
        \[
        u_i > \bar x_i + \alpha d_i \ge \bar x_i + \frac{\tilde\theta}{\sigma_2 M} d_i
        \ge \bar x_i - \tilde \theta > l_i, \qquad \forall \alpha \in ]0,\hat\alpha].
        \]
    \end{itemize}
    So, we have
    \[
    \bar x_i(\alpha) = \bar x_i + \alpha d_i, \quad \forall \alpha \in ]0,\hat\alpha],
    \]
    which implies
    \begin{equation}\label{proof_box_major1}
    g_i(\bar x)(\bar x_i(\alpha) - \bar x_i) = \alpha g_i(\bar x)d_i, \quad \forall \alpha \in ]0,\hat\alpha].
    \end{equation}
\item[(ii)] $i \in N(\bar x)$ such that $\bar x_i = l_i$. Recalling the definition of $N(x)$, it follows that $g_i(\bar x) \le 0$. We distinguish two subcases:
    \begin{itemize}
    \item if $d_i \ge 0$:
        \[
        l_i \le \bar x_i + \alpha d_i \le \bar x_i + \frac{\tilde\theta}{\sigma_2 M} d_i
        \le \bar x_i + \tilde \theta < u_i, \qquad \forall \alpha \in ]0,\hat\alpha],
        \]
        and then
        \[
        \bar x_i(\alpha) = \bar x_i + \alpha d_i, \quad \forall \alpha \in ]0,\hat\alpha],
        \]
        which implies
        \begin{equation}\label{proof_box_major2}
        g_i(\bar x)(\bar x_i(\alpha) - \bar x_i) = \alpha g_i(\bar x)d_i, \quad \forall \alpha \in ]0,\hat\alpha].
        \end{equation}
    \item else, if $d_i < 0$, we have
        \[
        \bar x_i(\alpha) = \bar x_i, \quad \forall \alpha > 0,
        \]
        and then
        \begin{equation}\label{proof_box_major3}
        0 = g_i(\bar x)(\bar x_i(\alpha) - \bar x_i) \le \alpha g_i(\bar x)d_i, \quad \forall \alpha > 0.
        \end{equation}
    \end{itemize}
\item[(iii)] $i \in N(\bar x)$ such that $\bar x_i = u_i$.
Following the same reasonings done in the previous step, we have that
     \begin{itemize}
     \item if $d_i \le 0$:
         \begin{equation}\label{proof_box_major4}
         g_i(\bar x)(\bar x_i(\alpha) - \bar x_i) = \alpha g_i(\bar x)d_i, \quad \forall \alpha \in ]0,\hat\alpha];
         \end{equation}
     \item else, if $d_i > 0$, we have
        \begin{equation}\label{proof_box_major5}
        0 = g_i(\bar x)(\bar x_i(\alpha) - \bar x_i) \le \alpha g_i(\bar x)d_i, \quad \forall \alpha > 0.
        \end{equation}
    \end{itemize}
\end{description}
From~\eqref{conddir1}, \eqref{proof_box_major1}, \eqref{proof_box_major2}, \eqref{proof_box_major3}, \eqref{proof_box_major4} and~\eqref{proof_box_major5},
we obtain
\begin{equation}\label{proof_box_major6}
\begin{split}
g(\bar x)^T(\bar x(\alpha) - \bar x) & = \sum_{i \in N(\bar x)} g_i(\bar x)(\bar x_i(\alpha) - \bar x) \\
                                     & \le \alpha\sum_{i \in N(\bar x)} g_i(\bar x)d_i
                                       = \alpha g_{N(\bar x)}(\bar x)^Td_{N(\bar x)}, \quad \forall \alpha \in ]0,\hat\alpha].
\end{split}
\end{equation}
Now, we consider the term $\displaystyle\frac{L}{2}\|\bar x(\alpha) - \bar x\|^{2}$. For every $i \in N(\bar x)$ such that $d_i \le 0$,
we have that $0 \le \bar x_i - \bar x_i(\alpha)\le -\alpha d_i$ holds for all $\alpha>0$.
Therefore,
\begin{equation}\label{Nk1}
(\bar x_i - \bar x_i(\alpha))^2\le \alpha^2 d_i^2, \quad \forall \alpha > 0.
\end{equation}
Else, for every $i \in N(\bar x)$ such that $d_i > 0$, we have that
$0 \le \bar x_i(\alpha) - \bar x_i\le \alpha d_i$ holds for all $\alpha > 0$.
Therefore,
\begin{equation} \label{Nk2}
0 \le (\bar x_i(\alpha) - \bar x_i)^2\le \alpha^2 d_i^2, \quad \forall \alpha > 0.
\end{equation}
Recalling~\eqref{conddir1}, from~\eqref{Nk1} and~\eqref{Nk2} we obtain
\[
\|\bar x(\alpha) - \bar x\|^2 \le \alpha^2 \|d_{N(\bar x)}\|^2, \quad \forall \alpha > 0.
\]
Using~\eqref{conddir2} and~\eqref{conddir3}, we get
\begin{equation}\label{proof_box_major7}
\begin{split}
\|\bar x(\alpha) - \bar x\|^2 & \le \alpha^2 \|d_{N(\bar x)}\|^2 \le \alpha^2 \sigma_2^2\|g_{N(\bar x)}(\bar x)\|^2 \\
                              & \le -\alpha^2 \frac{\sigma_2^2}{\sigma_1} g_{N(\bar x)}(\bar x)^Td_{N(\bar x)}, \quad \forall \alpha > 0.
\end{split}
\end{equation}
From~\eqref{conddir1}, \eqref{dim:mean_value}, \eqref{proof_box_major6} and~\eqref{proof_box_major7}, we can write
\[
\begin{split}
f(\bar x(\alpha))-f(\bar x) & \le \alpha\left(1 - \alpha\frac{L\sigma^2_2}{2\sigma_1}\right)g_{N(\bar x)}(\bar x)^T d_{N(\bar x)} \\
                            & = \alpha\left(1 - \alpha\frac{L\sigma^2_2}{2\sigma_1}\right)g(\bar x)^T d, \quad \forall \alpha \in ]0,\hat\alpha].
\end{split}
\]
It follows that~\eqref{prop_decr} is satisfied by choosing $\bar \alpha$ such that
\begin{gather*}
1 - \bar \alpha\frac{L\sigma^2_2}{2\sigma_1} \ge \gamma, \\
\bar\alpha \in ]0,\hat\alpha].
\end{gather*}
Thus, the proof is completed defining
\[
\bar{\alpha} := \min\left\{\hat\alpha,\frac{2\sigma_1(1-\gamma)}{L\sigma^2_2}\right\}.
\]
\end{proof}

\section{}\label{app:alg}
\small \linespread{1}

The scheme of the algorithm is reported in Algorithm~\ref{alg:ASA_BCP}.
At Step~$10$,~$17$ and~$25$ there is the update of the reference value of the non-monotone line search $f^j_R$:
we set $j := j+1$, $l^j := k$ and the reference value is updated according to the formula
\[
f^j_R := \max_{0 \le i \le \min\{j,M\}} \bigl\{f^{j-i}\bigr\}.
\]

\begin{algorithm}
\caption{\ASABCP}
\label{alg:ASA_BCP}
  \begin{algorithmic}
  \par\vspace*{0.1cm}
  \item$\,\,\,0$\hspace*{0.3truecm} Choose $x^0\in[l,u]$, fix $Z\ge 1$, $M\ge 0$, $\Delta_0\ge0$, $\beta\in ]0,1[$, $\delta\in ]0,1[$, $\gamma\in ]0,\frac 1 2[$, $k := 0$, \par
                                    $j := -1$, $l^{-1} := -1$, $f_R^{-1} := f(x^0)$, $f^{-1} := f(x^0)$, $\Delta = \tilde \Delta := \Delta_0$, $checkpoint := true$ \par
  \item$\,\,\,1$\hspace*{0.3truecm} While $x^k$ is a non-stationary point for problem~\eqref{prob}
  \item$\,\,\,2$\hspace*{0.8truecm} Compute $A_l^k:=A_l(x^k)$, $A_u^k:=A_u(x^k)$ and $N^k:=N(x^k)$
  \item$\,\,\,3$\hspace*{0.8truecm} Set $\tilde x^{k}_{A_l^k} := l_{A_l^k}$, $\tilde x^{k}_{A_u^k} := u_{A_u^k}$ and $\tilde x^{k}_{N^k} := x^k_{N^k}$
  \item$\,\,\,4$\hspace*{0.8truecm} If $\|\tilde x^k - x^k\| \le \tilde \Delta$, then set $\tilde \Delta = \beta \tilde \Delta$
  \item$\,\,\,5$\hspace*{0.8truecm} Else compute $f(x^k)$
  \item$\,\,\,6$\hspace*{1.3truecm} If $f(x^k) \ge f^j_R$, then backtrack to $\tilde x^{l^j}$, set $k := l^j$ and go to Step~$28$
  \item$\,\,\,7$\hspace*{0.8truecm} End if
  \item$\,\,\,8$\hspace*{0.8truecm} Compute $\tilde A_l^k:=A_l(\tilde x^k)$,  $\tilde A_u^k:=A_u(\tilde x^k)$ and $\tilde N^k:=N(\tilde x^k)$
  \item$\,\,\,9$\hspace*{0.8truecm} If $\tilde N^k \ne \emptyset$ and $g_{\tilde N^k}(\tilde x^k) \ne 0$
  \item$10$\hspace*{1.3truecm} If $checkpoint = true$, then compute $f(\tilde x^k)$ and update $f_R^j$
  \item$11$\hspace*{1.8truecm} Set $checkpoint := false$
  \item$12$\hspace*{1.3truecm} End if
  \item$13$\hspace*{1.3truecm} Set $d^{k}_{\tilde A_l^k} := 0$, $d^{k}_{\tilde A_u^k} := 0$ and compute a gradient-related direction $d^k_{\tilde N^k}$ in $\tilde x^k$
  \item$14$\hspace*{1.3truecm} If $k \ge l^j + Z$, then compute $f(\tilde x^k)$
  \item$15$\hspace*{1.8truecm} If $f(\tilde x^k) \ge f^j_R$
  \item$16$\hspace*{2.3truecm} Backtrack to $\tilde x^{l^j}$, set $d^k := d^{l^j}$, $k := l^j$ and go to Step~$28$
  \item$17$\hspace*{1.8truecm} Else update $f_R^j$
  \item$18$\hspace*{1.8truecm} End if
  \item$19$\hspace*{1.3truecm} End if
  \item$20$\hspace*{1.3truecm} If $\|{d^{k}_{\tilde N^k}}\| \le \Delta$
  \item$21$\hspace*{1.8truecm} Set $\alpha^k := 1$, $x^{k+1}:=[\tilde x^k + \alpha^k d^k]^{\sharp}$, $\Delta := \beta \Delta$, $k := k + 1$
  \item$22$\hspace*{1.3truecm} Else if $k \ne l^j$, then compute $f(\tilde x^k)$
  \item$23$\hspace*{1.8truecm} If $f(\tilde x^k) \ge f^j_R$
  \item$24$\hspace*{2.3truecm} Backtrack to $\tilde x^{l^j}$, set $d^k := d^{l^j}$, $k := l^j$ and go to Step~$28$
  \item$25$\hspace*{1.8truecm} Else update $f_R^j$
  \item$26$\hspace*{1.8truecm} End if
  \item$27$\hspace*{1.3truecm} End if
  \item$28$\hspace*{1.3truecm} Set $\alpha^k := \delta^{\nu}$, where $\nu$ is the smallest nonnegative integer for which
                               \[
                               f([\tilde x^k+\delta^{\nu} d^k]^{\sharp}) \le  f^j_R + \gamma \delta^{\nu} g(\tilde x^k)^T d^k
                               \]
  \item$29$\hspace*{1.3truecm} Set $x^{k+1} := [\tilde x^k+\alpha^k d^k]^{\sharp}$, $k := k + 1$, $checkpoint := true$
  \item$30$\hspace*{0.8truecm} Else
  \item$31$\hspace*{1.3truecm} Set $\alpha^k := 0$, $d^k := 0$, $x^{k+1}:=[\tilde x^k+\alpha^k d^k]^{\sharp}$, $k:=k+1$
  \item$32$\hspace*{0.8truecm} End if
  \item$33$\hspace*{0.3truecm} End while
  \par\vspace*{0.1cm}
  \end{algorithmic}
\end{algorithm}

\section{}\label{app:conv}
\small \linespread{1}

In this section, we prove Theorem~\ref{th:glob_conv}. Preliminarily, we need to state some results.

\begin{lemma}\label{lemma_nm1}
Let Assumption~\ref{ass:eps} hold. Suppose that \ASABCP\ produces an infinite sequence $\{x^k\}$, then
\begin{enumerate}[(i)]
\item $\{f^j_R\}$ is non-increasing and converges to a value $\bar f_R$;
\item for any fixed $j \ge 0$ we have:
\[
f^h_R < f^j_R, \quad \forall h > j+M.
\]
\end{enumerate}
\end{lemma}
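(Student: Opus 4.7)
The plan is to reduce both parts to a single strict descent property on the recorded function values, namely $f^j < f^{j-1}_R$ for every $j \ge 0$, where $f^j$ denotes the function value inserted into the sliding-window definition $f^j_R = \max_{0 \le i \le \min\{j,M\}} f^{j-i}$ at the $j$-th update.

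Granted this strict descent, part~(i) falls out of the sliding-window definition. I would write $f^j_R$ as the max of the freshly inserted $f^j$ and the earlier block $\{f^{j-1}, \ldots, f^{\max\{0,j-M\}}\}$; the block is a sub-collection of the set defining $f^{j-1}_R$, so its maximum is at most $f^{j-1}_R$, and by the strict descent so is $f^j$. Therefore $f^j_R \le f^{j-1}_R$, and $\{f^j_R\}$ is non-increasing. Continuity of $f$ on the compact box $[l,u]$ gives a uniform lower bound, so $\{f^j_R\}$ converges to some $\bar f_R$.

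For part~(ii), the strictness of the descent is essential. Fix $j \ge 0$ and let $h > j+M$. Every index $k$ appearing in the window defining $f^h_R$ lies in $\{h-M, \ldots, h\}$, hence satisfies $k \ge h-M > j$. For every such $k$, the strict descent gives $f^k < f^{k-1}_R$, while the monotonicity from part~(i) gives $f^{k-1}_R \le f^j_R$ (since $k-1 \ge j$). The maximum over the finitely many $k$ in the window preserves strict inequality, giving $f^h_R < f^j_R$.

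The main obstacle is verifying the strict descent. This requires a case analysis over the three sites where $j$ is incremented and $f^j_R$ is updated. Steps~17 and~25 are reached only after the explicit test $f(\tilde x^k) < f^j_R$ has just succeeded, so the new $f^j = f(\tilde x^k)$ obeys the strict inequality by construction. Step~10 is subtler: it fires only when $checkpoint = true$, a flag set exclusively at Step~29, immediately after the non-monotone acceptance test
\[
f\bigl([\tilde x^{k} + \delta^{\nu} d^{k}]^{\sharp}\bigr) \le f^j_R + \gamma \delta^{\nu} g(\tilde x^{k})^T d^{k},
\]
with $g(\tilde x^{k})^T d^{k} < 0$ by the gradient-related condition~\eqref{conddir2}. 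Combining this strict inequality with Proposition~\ref{prop1} applied on the transition from $x^{k+1}$ to $\tilde x^{k+1}$ at the next iteration yields $f(\tilde x^{k+1}) < f^j_R$, which is precisely the strict descent needed when Step~10 next fires. Tracing the $checkpoint$ flag through Steps~21 and~31 (where it is not reset) confirms that the argument covers every flow leading to Step~10.
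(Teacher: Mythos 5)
Your proof is correct, and it takes a genuinely more self-contained route than the paper, which disposes of this lemma in one line by citing Lemma~1 of \cite{grippo:1991}. The mechanism is the classical one behind that reference -- everything reduces to the strict descent $f^j < f^{j-1}_R$ of each newly recorded value below the previous reference value, combined with the sliding-window structure of the max -- but your version has the merit of actually verifying that descent at the three algorithm-specific update sites (Steps~10, 17 and~25), which the bare citation leaves implicit; in particular, your treatment of Step~10 via the $checkpoint$ flag and the chain ``Armijo decrease at Step~28 $\Rightarrow f(x^{k+1}) < f^j_R \Rightarrow$ (Proposition~\ref{prop1}) $f(\tilde x^{k'}) < f^j_R$ at the next firing of Step~10'' is exactly the algorithm-dependent content that needs checking here. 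Two small touch-ups. First, $checkpoint$ is also set to $true$ at initialization (Step~0), not exclusively at Step~29; hence the very first Step-10 update records $f^0 = f(\tilde x^{k_0}) \le f(x^0) = f_R^{-1}$ with possibly only \emph{non-strict} inequality (e.g.\ when $\tilde x^0 = x^0$), so the blanket claim of strict descent for every $j \ge 0$ should be weakened to $j \ge 1$. This is harmless: part~(i) needs only $f^j \le f^{j-1}_R$, and the window defining $f^h_R$ in part~(ii) involves only indices $k \ge h - M > j \ge 0$, hence $k \ge 1$, so index $0$ never enters. Second, Step~10 need not fire at iteration $k+1$ immediately after Step~29: it fires at the first subsequent iteration at which the test of Step~9 succeeds, and in the intervening iterations Step~31 together with Proposition~\ref{prop1} preserves $f(\tilde x^{k'}) \le f(\tilde x^{k+1}) < f^j_R$ while $j$ stays fixed (no update site is reached in between, and the Step-6 test cannot trigger a backtrack since $f(x^{k+1}) < f^j_R$); your remark about tracing the flag through Steps~21 and~31 gestures at this, but the propagation is worth making explicit.
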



\begin{proof}
The proof follows from Lemma~1 in~\cite{grippo:1991}.
\end{proof}

\begin{lemma}\label{lemma_nm2}
Let Assumption~\ref{ass:eps} hold. Suppose that \ASABCP\ produces an infinite sequence $\{x^k\}$ and an infinite sequence $\{\tilde x^k\}$.
For any given value of $k$, let $q(k)$ be the index such that
\[
q(k) := \max \{ j \colon l^j \le k \}.
\]
Then, there exists a sequence $\{\tilde x^{s(j)}\}$ and an integer $L$ satisfying the following conditions:
\begin{enumerate}[(i)]
\item $f^j_R = f(\tilde x^{s(j)})$
\item for any integer $k$, there exist an index $h^k$ and an index $j^k$ such that:
\begin{gather*}
0 < h^k - k \le L, \qquad h^k = s(j^k), \\
f^{j^k}_R = f(\tilde x^{h^k}) < f^{q(k)}_R.
\end{gather*}
\end{enumerate}
\end{lemma}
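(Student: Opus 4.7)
The plan is to construct $s(j)$, $h^k$, and $j^k$ explicitly from the running-maximum definition of $f^j_R$, combine this with Lemma~\ref{lemma_nm1}(ii), and close the argument with a uniform bound on the gap between consecutive reference-value updates. Unwinding the update formula $f^j_R = \max_{0 \le i \le \min\{j,M\}} \{f^{j-i}\}$ and noting that each $f^{j-i}$ equals $f(\tilde x^{l^{j-i}})$---the value recorded when the $(j-i)$-th reference-value update was performed at Step~10, 17, or 25 of \ASABCP---I would simply set $s(j) := l^{j-i^*(j)}$, where $i^*(j)$ attains this maximum. This yields part (i) at once, together with the useful inclusion $s(j) \in \{l^{j-M},\ldots,l^j\}$ valid for $j \ge M$.

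For part (ii), I would pick $j^k := q(k)+M+1$ and $h^k := s(j^k)$. Since $j^k > q(k)+M$, Lemma~\ref{lemma_nm1}(ii) gives $f^{j^k}_R < f^{q(k)}_R$, and part (i) gives $f^{j^k}_R = f(\tilde x^{h^k})$. The lower bound is immediate: $h^k = s(j^k) \ge l^{j^k - M} = l^{q(k)+1} > k$, where the strict inequality is the defining property $q(k) = \max\{j : l^j \le k\}$. For the upper bound one needs a uniform constant $L_0$ with $l^{j+1} - l^j \le L_0$ for all $j$; given such an $L_0$, one has $h^k \le l^{j^k} \le l^{q(k)} + (M+1) L_0 \le k + (M+1) L_0$, so that $L := (M+1) L_0$ works.

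The main obstacle is this uniform gap bound, which requires a careful walk through the algorithm's bookkeeping. I would argue $L_0 = Z$ as follows. After $Z$ iterations past $l^j$ the condition $k \ge l^j + Z$ triggers Step~14 and forces a function evaluation. Either this evaluation lies strictly below $f^j_R$ and produces a new update at Step~17 (so $l^{j+1} \le l^j + Z$), or the backtracking branch at Step~16 is executed, after which the line search of Steps~28--29 sets \emph{checkpoint} $=$ true; then Step~10 at the very next iteration performs an update. Here one relies on the infinite-sequence hypothesis together with Proposition~\ref{prop:stationary_point2} to guarantee that $\tilde N^k \ne \emptyset$ and $g_{\tilde N^k}(\tilde x^k) \ne 0$, so that the Step~10 branch is actually reached. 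A case analysis of the interplay between \emph{checkpoint}, the unit-step branch at Step~21, and the various backtracking branches confirms the bound, after which the argument above yields $L = (M+1)Z$ and completes the proof.
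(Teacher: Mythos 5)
Your explicit construction of $s(j)$, $h^k$ and $j^k$ from the running-maximum definition of $f^j_R$, combined with Lemma~\ref{lemma_nm1}(ii), is a sound and more self-contained rendering of the part of the argument that the paper simply delegates to Lemma~2 of Grippo--Lampariello--Lucidi; you have correctly identified that everything reduces to a uniform bound $L_0$ on $l^{j+1}-l^j$. However, your proof of that bound has a genuine gap, and it is located exactly at the one point where the paper's proof adds something beyond the citation. Your case analysis assumes that once $k \ge l^j + Z$ the algorithm reaches Step~14 (or, after a backtrack, Step~10 at the next iteration). But Steps~10--29 are only executed when the test at Step~9 succeeds, i.e.\ when $\tilde N^k \ne \emptyset$ and $g_{\tilde N^k}(\tilde x^k) \ne 0$. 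When that test fails the algorithm takes the Step~31 branch: it sets $x^{k+1} := \tilde x^k$, performs no function evaluation, and performs no reference-value update. A priori this can happen for many consecutive iterations, during which $k$ grows past $l^j+Z$ without Step~14 ever being reached, so the bound $L_0 = Z$ is not established. Your appeal to Proposition~\ref{prop:stationary_point2} does not close this: that proposition characterizes stationarity under a hypothesis on the active-set estimate, but it does not assert that the Step~9 condition holds at every (or even at any particular) iteration.

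What is actually needed --- and what the paper's proof supplies --- is a uniform bound $\tilde L$ on the length of a consecutive run of Step~31 iterations. The paper's argument is: if Step~9 fails at iteration $k$, then $x^{k+1}=\tilde x^k$ satisfies $g_i(x^{k+1})=0$ for all $i \in N(x^{k+1})$ (vacuously if $N(x^{k+1})=\emptyset$); since the sequence is infinite, $x^{k+1}$ is non-stationary, so Proposition~\ref{prop:stationary_point3} forces some estimated-active variable to lie strictly inside its bound, whence $\tilde x^{k+1} \ne x^{k+1}$ and, by Proposition~\ref{prop1}, $f$ strictly decreases. The successive points produced in such a run differ from one another only in which coordinates sit at $l_i$ or $u_i$, so there are finitely many of them; strict decrease makes them pairwise distinct, and the run must terminate within a number of iterations depending only on $n$. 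With this $\tilde L$ in hand your bookkeeping goes through (with $L_0$ of the order of $Z+\tilde L$ plus a small constant for the post-backtracking iteration, rather than $Z$ exactly), and the rest of your argument is fine. As written, though, the proposal omits the essential ingredient.
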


\begin{proof}
The proof follows from Lemma~$2$ in~\cite{grippo:1991} taking into account that for any iteration index $k$, there exists an integer $\tilde L$ such that the condition of Step~$9$ is satisfied within the $(k+\tilde L)$-th iteration. In fact, assume by contradiction that it is not true. If Step~$9$ is not satisfied at a generic iteration $k$, then $x^{k+1} = \tilde x^k$. Since the sequences $\{x^k\}$ and $\{\tilde x^k\}$ are infinite, Proposition~\ref{prop:stationary_point3} implies that $\tilde x^{k+1} \ne x^{k+1}$ and that the objective function strictly decreases. Repeating this procedure for an infinite number of steps, an infinite sequence of distinct points $\{x^{k+1}, x^{k+2}, \dots\}$ is produced, where these points differ from each other only for the values of the variables at the bounds. Since the number of variables is finite, this produces a contradiction.
\end{proof}

\begin{lemma}\label{lemma_nm3}
Let Assumption~\ref{ass:eps} hold. Suppose that \ASABCP\ produces an infinite sequence $\{x^k\}$ and an infinite sequence $\{\tilde x^k\}$. Then,
\begin{gather}
\label{conv_f_R} \lim_{k \to \infty} f(x^{k}) = \lim_{k \to \infty} f(\tilde x^{k}) = \lim_{j \rightarrow \infty} f^j_R = \bar f_R, \\
\label{conv_x} \lim_{k \to \infty} \| x^{k+1} - \tilde x^k \| = \lim_{k \to \infty} \alpha^k \|d^k\| = 0, \\
\label{conv_x_tilde} \lim_{k \to \infty} \| \tilde x^k - x^k \| = 0.
\end{gather}
\end{lemma}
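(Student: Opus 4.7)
The plan is to establish the three sets of limits in sequence, adapting the classical non-monotone line search convergence argument of Grippo, Lampariello and Lucidi to account for the two-stage structure of \ASABCP\ and for the fact that some iterations skip the objective evaluation entirely.

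First I would combine Lemma~\ref{lemma_nm1}(i), which gives $f_R^j \to \bar f_R$, with Lemma~\ref{lemma_nm2}(i), which asserts $f(\tilde x^{s(j)}) = f_R^j$, to obtain $f(\tilde x^{s(j)}) \to \bar f_R$ along the subsequence. For those $j$ at which the point $\tilde x^{s(j)}$ was produced via the line search at Step~28, the acceptance rule~\eqref{suffdec} gives
\[
f(\tilde x^{s(j)}) \le f_R^{q(s(j)-1)} + \gamma\,\alpha^{s(j)-1} g(\tilde x^{s(j)-1})^T d^{s(j)-1}.
\]
Since $f_R^{q(s(j)-1)} \to \bar f_R$ as well, this forces $\alpha^{s(j)-1} g(\tilde x^{s(j)-1})^T d^{s(j)-1} \to 0$; combining with the gradient-relatedness conditions~\eqref{conddir2}--\eqref{conddir3} yields $\alpha^{s(j)-1} \|g_{\tilde N^{s(j)-1}}(\tilde x^{s(j)-1})\|^2 \to 0$, whence $\alpha^{s(j)-1}\|d^{s(j)-1}\| \to 0$ (splitting into the cases where $\|d^{s(j)-1}\|$ is or is not bounded away from zero). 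The remaining branches are treated directly: at Step~21, $\|d^k\|$ is dominated by $\Delta$ which is multiplied by $\beta\in\,]0,1[$ each visit; at Step~31, $\alpha^k=0$ and $d^k=0$; the backtracking at Steps~6/16/24 resets the iterate but not the conclusion.

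Next I would propagate $\alpha^k\|d^k\|\to 0$ from the subsequence $\{s(j)\}$ to all $k$ by backward induction on $h^k-k$, exploiting the uniform bound $h^k-k\le L$ from Lemma~\ref{lemma_nm2}(ii), the continuity of $f$ on the compact set $[l,u]$, and the non-expansiveness of $[\cdot]^{\sharp}$. This argument, a direct adaptation of Lemma~3 of~\cite{grippo:1991}, simultaneously delivers $\|x^{k+1}-\tilde x^k\|\le \alpha^k\|d^k\|\to 0$ together with $f(x^k),\ f(\tilde x^k)\to\bar f_R$, which settles~\eqref{conv_f_R} and~\eqref{conv_x}.

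The limit~\eqref{conv_x_tilde} then follows from the first-stage descent inequality of Proposition~\ref{prop1}: at every $k$ with $A_l^k\cup A_u^k\ne\emptyset$ we have
\[
\tfrac{1}{2\epsilon}\|\tilde x^k-x^k\|^2 \le f(x^k)-f(\tilde x^k),
\]
and since both sides of the right-hand difference tend to $\bar f_R$ by~\eqref{conv_f_R}, the left-hand side vanishes; the case $A_l^k\cup A_u^k=\emptyset$ is trivial because then $\tilde x^k=x^k$. The step I expect to be most delicate is the extension from the subsequence $\{s(j)\}$ to the full sequence: one must verify, branch by branch, that regardless of which of Steps~21/28/31 (and possibly a preceding backtracking at Steps~6/16/24) produced $\tilde x^{s(j)}$, an inequality of the form $f(\tilde x^{s(j)}) \le f_R^{q(s(j)-1)} + \gamma \alpha^{s(j)-1} g(\tilde x^{s(j)-1})^T d^{s(j)-1}$ (or its trivial counterpart for unit/zero steps) remains available to drive the induction, so that the bound $\alpha^k\|d^k\|\to 0$ truly holds uniformly and not merely along the evaluation subsequence.
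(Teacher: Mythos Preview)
Your overall strategy---the backward induction along $s(j)$ with the uniform bound $h^k-k\le L$ from Lemma~\ref{lemma_nm2}(ii), followed by Proposition~\ref{prop1} for~\eqref{conv_x_tilde}---is the same as the paper's. But there is a real gap in how you organize it.

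The backward induction must alternate between \emph{two} transitions per step, not one: the second-stage move $x^{k+1}\leftarrow\tilde x^k$ (line search / unit step / zero step) \emph{and} the first-stage move $\tilde x^k\leftarrow x^k$ (active-set projection). You treat only the former. Concretely, the acceptance rule~\eqref{suffdec} bounds $f(x^{s(j)})$, not $f(\tilde x^{s(j)})$; so from the inductive hypothesis $f(\tilde x^{s(j)-i})\to\bar f_R$ you cannot invoke the line search at step $s(j)-i-1$ until you have first shown $f(x^{s(j)-i})\to\bar f_R$. That passage requires the first-stage analysis: either the proximity test at Step~4 holds (giving $\|\tilde x^{s(j)-i}-x^{s(j)-i}\|\le\beta^t\tilde\Delta_0\to 0$ directly), or it fails, in which case Step~6 was passed so $f(x^{s(j)-i})<f_R^{q(s(j)-i)-1}$, and together with $f(\tilde x^{s(j)-i})\le f(x^{s(j)-i})$ from Proposition~\ref{prop1} you get a sandwich forcing $f(x^{s(j)-i})\to\bar f_R$. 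The paper makes this explicit by partitioning into $\mathcal K_4$ (proximity check satisfied) and $\mathcal K_5$ (function check satisfied) and weaving both cases into each inductive step.

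Deferring Proposition~\ref{prop1} until after~\eqref{conv_f_R} is established, as you do, is circular: you need it (or the proximity check) inside the induction to obtain~\eqref{conv_f_R} for $f(x^k)$ in the first place. Once you fold the $\mathcal K_4/\mathcal K_5$ case split into the induction, the rest of your plan---including the derivation of~\eqref{conv_x_tilde} from~\eqref{conv_f_R} via Proposition~\ref{prop1}---matches the paper's proof exactly.
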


\begin{proof}
We build two different partitions of the iterations indices to analyze the computation of $x^{k+1}$ from $\tilde x^k$ and that of $\tilde x^k$ from $x^k$, respectively.
From the instructions of the algorithm, it follows that $x^{k+1}$ can be computed at Step~$21$, Step~$29$ or Step~$31$. Let us consider the following subset of iteration indices:
\begin{gather*}
\mathcal K_1 := \{k \colon x^{k+1} \text{ is computed at Step $21$}\}, \\
\mathcal K_2 := \{k \colon x^{k+1} \text{ is computed at Step $29$}\}, \\
\mathcal K_3 := \{k \colon x^{k+1} \text{ is computed at Step $31$}\}.
\end{gather*}
Then, we have
\[
\mathcal K_1 \cup \mathcal K_2 \cup \mathcal K_3 = \{0,1,\dots\}.
\]
As regards the computation of $\tilde x^k$, we distinguish two further subsets of iterations indices:
\begin{gather*}
\mathcal K_4 := \{k \colon \tilde x^k \text{ satisfies the test at Step $4$}\}, \\
\mathcal K_5 := \{k \colon \tilde x^k \text{ does not satisfy the test at Step $4$}\}.
\end{gather*}
Then, we have
\[
\mathcal K_4 \cup \mathcal K_5 = \{0,1,\dots\}.
\]
Preliminarily, we point out some properties of the above subsequences. The subsequence $\{\tilde x^k\}_{\mathcal K_1}$ satisfies
\[
\| x^{k+1} - \tilde x^k \| = \alpha^k\|d^k\| = \|d^k\| \le \beta^t\Delta_0, \quad k \in \mathcal K_1,
\]
where the integer $t$ increases with $k \in \mathcal K_1$. Since $\beta \in ]0,1)$, if $\mathcal K_1$ is infinite, we have
\begin{equation}\label{conv_k1}
\lim_{k \to \infty, \, k \in \mathcal K_1} \| x^{k+1} - \tilde x^k \| = \lim_{k \to \infty, \, k \in \mathcal K_1} \alpha^k \| d^k \| = 0.
\end{equation}
Moreover, since $\alpha^k = 0$ and $d^k = 0$ for all $k \in \mathcal K_3$, if $\mathcal K_3$ is infinite, we have
\begin{equation}\label{conv_k3}
\lim_{k \to \infty, \, k \in \mathcal K_3} \| x^{k+1} - \tilde x^k \| = \lim_{k \to \infty, \, k \in \mathcal K_3} \alpha^k \| d^k \| = 0.
\end{equation}
The subsequence $\{\tilde x^k\}_{\mathcal K_4}$ satisfies
\[
\|\tilde x^k - x^k\| \le \beta^t\tilde \Delta_0, \quad k \in \mathcal K_4,
\]
where the integer $t$ increases with $k \in \mathcal K_4$. Since $\beta \in ]0,1[$, if $\mathcal K_4$ is infinite, we have
\begin{equation}\label{conv_k4}
\lim_{k \to \infty, \, k \in \mathcal K_4} \|\tilde x^k - x^k\| = 0.
\end{equation}
Now we prove~\eqref{conv_f_R}. Let $s(j)$, $h^k$ and $q(k)$ be the indices defined in Lemma~\ref{lemma_nm2}. We show that for any fixed integer $i \ge 1$, the following relations hold:
\begin{gather}
\label{conv_nm1} \lim_{j \to \infty} \| \tilde x^{s(j)-i+1} - x^{s(j)-i+1} \| = 0, \\
\label{conv_nm2} \lim_{j \to \infty} \| x^{s(j)-i+1} - \tilde x^{s(j)-i} \| = \lim_{j \to \infty} \alpha^{s(j)-i} \| d^{s(j)-i} \| = 0, \\
\label{conv_nm3} \lim_{j \to \infty} f(x^{s(j)-i+1}) = \bar f_R, \\
\label{conv_nm4} \lim_{j \to \infty} f(\tilde x^{s(j)-i}) = \bar f_R.
\end{gather}
Without loss of generality, we assume that $j$ is large enough to avoid the occurrence of negative apices. We proceed by induction and first show that~\eqref{conv_nm1}--\eqref{conv_nm4} hold for $i=1$.
If $s(j) \in \mathcal K_4$, relations~\eqref{conv_nm1} and~\eqref{conv_nm3} follow from~\eqref{conv_k4} and the continuity of the objective function. If $s(j) \in \mathcal K_5$, from the instructions of the algorithm and taking into account Proposition~\ref{prop1}, we get
\[
f^j_R = f(\tilde x^{s(j)}) \le f(x^{s(j)}) - \dfrac{1}{2\epsilon}\|x^{s(j)} - \tilde x^{s(j)}\|^2 < f^{j-1}_R,
\]
from which we get
\[
f^j_R = f(\tilde x^{s(j)}) \le f(x^{s(j)}) < f^{j-1}_R,
\]
and then, from point~(i) of Lemma~\ref{lemma_nm1}, it follows that
\begin{equation}\label{f_x_s(j)_to_f_x_tilde_s(j)}
\lim_{j \to \infty} f(\tilde x^{s(j)}) = \lim_{j \to \infty} f(x^{s(j)}) = \bar f_R,
\end{equation}
which proves~\eqref{conv_nm3} for $i=1$. From the above relation, and by exploiting Proposition~\ref{prop1} again, we have that
\[
\lim_{j \to \infty} \bigl( f(\tilde x^{s(j)}) - f(x^{s(j)}) \bigr) \le \lim_{j \to \infty} - \dfrac{1}{2\epsilon}\|x^{s(j)} - \tilde x^{s(j)}\|^2.
\]
and then~\eqref{conv_nm1} holds for $i=1$.\\
If $s(j)-1 \in \mathcal K_1 \cup \mathcal K_3$, from~\eqref{conv_k1} and~\eqref{conv_k3} it is straightforward to verify that~\eqref{conv_nm2} holds for $i=1$. By exploiting the continuity of the objective function, since~\eqref{conv_nm2} and~\eqref{conv_nm3} hold for $i=1$, then also~\eqref{conv_nm4} is verified for $i=1$.
If $s(j)-1 \in \mathcal {K}_2$, from the instruction of the algorithm, we obtain
\[
f(x^{s(j)}) = f(\tilde x^{s(j)-1} + \alpha^{s(j)-1} d^{s(j)-1}) \le f^{q(s(j)-1)}_R + \gamma \alpha^{s(j)-1} g(\tilde x^{s(j)-1})^T d^{s(j)-1},
\]
and then
\[
f(x^{s(j)}) - f^{q(s(j)-1)}_R \le \gamma \alpha^{s(j)-1} g(\tilde x^{s(j)-1})^T d^{s(j)-1}.
\]
From~\eqref{f_x_s(j)_to_f_x_tilde_s(j)}, point~(i) of Lemma~\ref{lemma_nm1}, and recalling~\eqref{conddir1}--\eqref{conddir3}, we have that
\[
\lim_{j \to \infty} \alpha^{s(j)-1} \|d^{s(j)-1}\| = \lim_{j \to \infty} \| x^{s(j)} - \tilde x^{s(j)-1} \| = 0
\]
for every subsequence such that $s(j)-1 \in \mathcal K_2$. Therefore,~\eqref{conv_nm2} holds for $i=1$.
Recalling that $f(x^{s(j)}) = f(\tilde x^{s(j)-1} + \alpha^{s(j)-1} d^{s(j)-1})$, and since~\eqref{conv_nm1} and~\eqref{conv_nm2} hold for $i=1$, from the continuity of the objective function it follows that also~\eqref{conv_nm4} holds for $i=1$.\\
Now we assume that~\eqref{conv_nm1}--\eqref{conv_nm4} hold for a given fixed $i \ge 1$ and show that these relations must hold for $i + 1$ as well.
If $s(j) - i \in \mathcal K_4$, by using~\eqref{conv_k4}, it is straightforward to verify that~\eqref{conv_nm1} is verified replacing $i$ with $i+1$.
Taking into account~\eqref{conv_nm4}, this implies that
\[
\lim_{j \to \infty} f(x^{s(j) - (i+1) +1}) = \lim_{j \to \infty} f(x^{s(j) - i}) = \lim_{j \to \infty} f(\tilde x^{s(j)-i}) = \bar f_R,
\]
and then~\eqref{conv_nm3} holds for $i+1$. If $s(j) - i \in \mathcal K_5$, from the instructions of the algorithm, and taking into account Proposition~\ref{prop1}, we get
\[
f(\tilde x^{s(j)-i}) \le f(x^{s(j)-i}) - \dfrac{1}{2\epsilon}\|x^{s(j)-i} - \tilde x^{s(j)-i}\|^2 < f^{q(s(j)-i)-1}_R.
\]
Exploiting~\eqref{conv_nm4} and point~(i) of Lemma~\ref{lemma_nm1}, we have that
\begin{equation}\label{f_x_s(j)_to_f_x_tilde_s(j)_2}
\lim_{j \to \infty} f(\tilde x^{s(j)-i}) = \lim_{j \to \infty} f(x^{s(j)-i}) = \bar f_R,
\end{equation}
which proves~\eqref{conv_nm3} for $i+1$. From the above relation, and by exploiting Proposition~\ref{prop1} again, we can also write
\[
\lim_{j \to \infty} \bigl( f(\tilde x^{s(j)-i}) - f(x^{s(j)}-i) \bigr) \le \lim_{j \to \infty} - \dfrac{1}{2\epsilon}\|x^{s(j)-i} - \tilde x^{s(j)-i}\|^2.
\]
and then~\eqref{conv_nm1} holds for $i+1$.\\
If $s(j)-i-1 \in \mathcal K_1 \cup \mathcal K_3$, from~\eqref{conv_k1} and~\eqref{conv_k3}, we obtain that~\eqref{conv_nm2} holds for $i=i+1$.
Since $x^{s(j)-i} = \tilde x^{s(j)-i-1} + \alpha^{s(j)-i-1} d^{s(j)-i-1}$, exploiting~\eqref{conv_k1}, \eqref{conv_k3}, \eqref{f_x_s(j)_to_f_x_tilde_s(j)_2} and the continuity of the objective function, we obtain that~\eqref{conv_nm4}
holds replacing $i$ with $i+1$.\\
If $s(j)-i-1 \in \mathcal {K}_2$, from the instruction of the algorithm, we obtain
\[
\begin{split}
f(x^{s(j)-i}) & = f(\tilde x^{s(j)-i-1} + \alpha^{s(j)-i-1} d^{s(j)-i-1}) \\
              & \le f^{q(s(j)-i-1)}_R + \gamma \alpha^{s(j)-i-1} g(\tilde x^{s(j)-i-1})^T d^{s(j)-i-1},
\end{split}
\]
and then
\[
f(x^{s(j)-i}) - f^{q(s(j)-i-1)}_R \le \gamma \alpha^{s(j)-i-1} g(\tilde x^{s(j)-i-1})^T d^{s(j)-i-1}.
\]
From~\eqref{f_x_s(j)_to_f_x_tilde_s(j)_2}, point~(i) of Lemma~\ref{lemma_nm1}, and recalling~\eqref{conddir1}--\eqref{conddir3}, we have that
\[
\lim_{j \to \infty} \alpha^{s(j)-i-1} \|d^{s(j)-i-1}\| = \lim_{j \to \infty} \| x^{s(j)-i} - \tilde x^{s(j)-i-1} \| = 0
\]
for every subsequence such that $s(j)-1 \in \mathcal K_2$. Therefore,~\eqref{conv_nm2} holds for $i+1$.\\
Recalling that $f(x^{s(j)-i}) = f(\tilde x^{s(j)-i-1} + \alpha^{s(j)-i-1} d^{s(j)-i-1})$, and since~\eqref{conv_nm1} and~\eqref{conv_nm2} hold replacing $i$ with $i+1$, exploiting the continuity of the objective function, we have
\[
\lim_{j \to \infty} f(\tilde x^{s(j)-i-1} + \alpha^{s(j)-i-1} d^{s(j)-i-1}) = \lim_{j \to \infty} f(x^{s(j)-i}) = \lim_{j \to \infty} f(\tilde x^{s(j)-i}).
\]
Therefore, if~\eqref{conv_nm4} holds at a generic $i \ge 1$, it must hold for $i+1$ as well. This completes the induction.\\
Now, for any iteration index $k>0$, we can write
\[
\begin{split}
& \tilde x^{h^k} = x^k + \sum_{i=0}^{h^k - k} \bigl(\tilde x^{h^k-i} - x^{h^k-i}\bigr) + \sum_{i=1}^{h^k - k} \alpha^{h^k-i} d^{h^k-i}, \\
& \tilde x^{h^k} = \tilde x^k + \sum_{i=0}^{h^k - k - 1} \bigl(\tilde x^{h^k-i} - x^{h^k-i}\bigr) + \sum_{i=1}^{h^k - k} \alpha^{h^k-i} d^{h^k-i}.
\end{split}
\]
From~\eqref{conv_nm1} and~\eqref{conv_nm2}, we obtain
\[
\lim_{k \to \infty} \|x^k - \tilde x^{h^k}\| = \lim_{k \to \infty} \|\tilde x^k - \tilde x^{h^k}\| = 0.
\]
By exploiting the continuity of the objective function, and taking into account point~(i) of Lemma~\ref{lemma_nm1}, the above relation implies that
\[
\lim_{k \to \infty} f(x^k) = \lim_{k \to \infty} f(\tilde x^k) = \lim_{k \to \infty} f(\tilde x^{h^k}) = \lim_{k \to \infty} f^j_R = \bar f_R,
\]
which proves~\eqref{conv_f_R}.\\
To prove~\eqref{conv_x}, if $k \in \mathcal K_1 \cup \mathcal K_3$, then from~\eqref{conv_k1} and~\eqref{conv_k3} we obtain
\begin{equation}\label{conv_x_k1_k3}
\lim_{k \to \infty, \, k \in \mathcal K_1 \cup \mathcal K_3} \| x^{k+1} - \tilde x^k \| = \lim_{k \to \infty, \, k \in \mathcal K_1 \cup \mathcal K_3} \alpha^k \| d^k \| = 0.
\end{equation}
If $k \in \mathcal K_2$, from the instruction of the algorithm, we get
\[
f(x^{k+1}) \le f(\tilde x^{q(k)}) + \gamma \alpha^k g(\tilde x^k)^T d^k,
\]
and then, recalling conditions~\eqref{conddir1}--\eqref{conddir3} and~\eqref{conv_f_R}, we can write
\begin{equation}\label{conv_x_k2}
\lim_{k \to \infty, \, k \in \mathcal K_2} \| x^{k+1} - \tilde x^k \| = \lim_{k \to \infty, \, k \in \mathcal K_2} \alpha^k \|d^k\| = 0.
\end{equation}
From~\eqref{conv_x_k1_k3} and~\eqref{conv_x_k2}, it follows that~\eqref{conv_x} holds.\\
To prove~\eqref{conv_x_tilde}, if $k \in \mathcal K_4$, then from~\eqref{conv_k4} we obtain
\begin{equation}\label{conv_x_tilde_k4}
\lim_{k \to \infty, \, k \in \mathcal K_4} \| \tilde x^k - x^k \| = 0.
\end{equation}
If $k \in \mathcal K_5$, from the instruction of the algorithm and recalling Proposition~\ref{prop1}, we get
\[
f(\tilde x^k) \le f(x^k) - \dfrac{1}{2\epsilon}\|x^k - \tilde x^k\|^2 < f^{q(k)-1}_R.
\]
From~\eqref{conv_f_R} and point~(i) of Lemma~\ref{lemma_nm1}, we have that
\[
\lim_{k \to \infty, \, k \in \mathcal K_5} f(\tilde x^k) = \lim_{k \to \infty, \, k \in \mathcal K_5} f(x^k) = \bar f_R.
\]
By exploiting Proposition~\ref{prop1} again, we can write
\[
\lim_{k \to \infty, \, k \in \mathcal K_5} \bigl( f(\tilde x^k) - f(x^k) \bigr) \le \lim_{k \to \infty, \, k \in \mathcal K_5} - \dfrac{1}{2\epsilon}\|x^k - \tilde x^k\|^2 = 0,
\]
and then
\begin{equation}\label{conv_x_tilde_k5}
\lim_{k \to \infty, \, k \in \mathcal K_5} \| \tilde x^k - x^k \| = 0.
\end{equation}
From~\eqref{conv_x_tilde_k4} and~\eqref{conv_x_tilde_k5}, it follows that~\eqref{conv_x_tilde} holds.
\end{proof}

The following theorem extends a known result from unconstrained optimization, guaranteeing that the sequence of the directional derivatives
along the search direction converges to zero.
\begin{theorem}\label{th:lim_dir_der}
Let Assumption~\ref{ass:eps} hold. Assume that \ASABCP\ does not terminate in a finite number of iterations, and let $\{x^k\}$, $\{\tilde x^k\}$ and $\{d^k\}$ be the sequences produced by the algorithm. Then,
\begin{equation}\label{lim_gd}
\lim_{k\to\infty} g(\tilde x^k)^Td^k = 0.
\end{equation}
\end{theorem}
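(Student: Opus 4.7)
The plan is to exploit the partition $\mathcal K_1 \cup \mathcal K_2 \cup \mathcal K_3$ of iteration indices introduced in the proof of Lemma~\ref{lemma_nm3} and verify separately that $g(\tilde x^k)^T d^k \to 0$ along each subsequence. The cases $\mathcal K_1$ and $\mathcal K_3$ are essentially immediate. On $\mathcal K_3$ the algorithm sets $d^k=0$, so $g(\tilde x^k)^T d^k = 0$. On $\mathcal K_1$ the acceptance test $\|d^k\|\le \Delta$ is passed with $\Delta$ contracted by a factor $\beta\in(0,1)$ each time this branch is entered, so $\|d^k\|\to 0$ along $\mathcal K_1$; since $g$ is continuous on the compact box $[l,u]$, Cauchy--Schwarz gives $|g(\tilde x^k)^T d^k| \le \|g(\tilde x^k)\|\,\|d^k\|\to 0$.

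The substance of the argument is the case $k\in\mathcal K_2$. The non-monotone Armijo test of Step~28 can be rearranged as
\[
-\gamma\,\alpha^k\, g(\tilde x^k)^T d^k \le f_R^{q(k)} - f(x^{k+1}),
\]
and since both $f(x^{k+1})$ and $f_R^{q(k)}$ tend to $\bar f_R$ by Lemma~\ref{lemma_nm3} and Lemma~\ref{lemma_nm1}(i), we deduce $\alpha^k g(\tilde x^k)^T d^k\to 0$ along $\mathcal K_2$. To upgrade this to $g(\tilde x^k)^T d^k\to 0$, I argue by contradiction: if some subsequence $\mathcal K\subseteq\mathcal K_2$ satisfies $g(\tilde x^k)^T d^k \le -\eta$ for a fixed $\eta>0$, then necessarily $\alpha^k\to 0$ on $\mathcal K$, so eventually the previous trial stepsize $\tilde\alpha^k := \alpha^k/\delta$ violates the Armijo test:
\[
f\bigl([\tilde x^k + \tilde\alpha^k d^k]^\sharp\bigr) > f_R^{q(k)} + \gamma\,\tilde\alpha^k\, g(\tilde x^k)^T d^k.
\]
By~\eqref{conddir3} and the boundedness of $g$ on $[l,u]$ the sequence $\{d^k\}$ is bounded, so up to a further subsequence we may assume $\tilde x^k\to \bar x$ and $d^k\to \bar d$ with $g(\bar x)^T\bar d \le -\eta$.

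The main obstacle is to convert the failed Armijo test at $\tilde\alpha^k\to 0$ into a first-order statement about $\bar d$, despite the presence of the projection and of the reference value $f_R^{q(k)}$. Paralleling the proof of Proposition~\ref{prop:line_search}, for $\tilde\alpha^k$ small enough the projection acts as the identity on the coordinates in $\tilde N^k$ (the estimated-active components being frozen by~\eqref{conddir1}), so a first-order Taylor expansion around $\tilde x^k$ legitimately gives
\[
f\bigl([\tilde x^k + \tilde\alpha^k d^k]^\sharp\bigr) = f(\tilde x^k) + \tilde\alpha^k\, g(\tilde x^k)^T d^k + o(\tilde\alpha^k).
\]
Plugging this into the failed test, dividing by $\tilde\alpha^k$ and passing to the limit on $\mathcal K$, one targets $(1-\gamma)\,g(\bar x)^T\bar d \ge 0$, which together with $\gamma\in(0,\tfrac12)$ contradicts $g(\bar x)^T\bar d\le -\eta<0$.

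The delicate technical point is the control of the gap $f_R^{q(k)}-f(\tilde x^k)$: for the contradiction to close, it must be shown to vanish faster than $\tilde\alpha^k$ along $\mathcal K$. This is where Lemma~\ref{lemma_nm2} enters in an essential way: it identifies $f_R^{q(k)}$ with $f(\tilde x^{s(q(k))})$ for an index $s(q(k))$ uniformly close to $k$, so the telescoping estimates established in the proof of Lemma~\ref{lemma_nm3} (bounding $\|\tilde x^{s(q(k))}-\tilde x^k\|$ in terms of the vanishing quantities $\|\tilde x^i-x^i\|$ and $\alpha^i\|d^i\|$) can be combined with the Lipschitz continuity of $f$ on $[l,u]$ to obtain the needed $o(\tilde\alpha^k)$ control. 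Once this step is carried out, combining the three cases yields~\eqref{lim_gd}.
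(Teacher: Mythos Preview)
Your overall architecture matches the paper's: split off the trivial cases, argue by contradiction on the remaining subsequence, force $\alpha^k\to 0$, and extract a contradiction from the failed Armijo step at $\tilde\alpha^k=\alpha^k/\delta$. However, you have inverted the difficulties, and the step you treat as routine is exactly the one that breaks.

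\textbf{The projection is not eventually the identity.} Your Taylor expansion
\[
f\bigl([\tilde x^k + \tilde\alpha^k d^k]^\sharp\bigr) = f(\tilde x^k) + \tilde\alpha^k\, g(\tilde x^k)^T d^k + o(\tilde\alpha^k)
\]
is not justified. Writing $[\tilde x^k+\tilde\alpha^k d^k]^\sharp=\tilde x^k+\tilde\alpha^k d^k - y^k$, the clipping term satisfies $|y^k_i|\le \tilde\alpha^k|d^k_i|$, so $g(\tilde x^k)^T y^k$ is $O(\tilde\alpha^k)$, not $o(\tilde\alpha^k)$. The projection acts as the identity only on coordinates with $l_i<\bar x_i<u_i$; if $i\in\hat N$ but $\bar x_i=l_i$ and $\bar d_i<0$, then $\tilde x^k_i-l_i$ and $\tilde\alpha^k$ both tend to zero with no control on their ratio, and clipping may occur for arbitrarily large $k$. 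Proposition~\ref{prop:line_search} does not give you equality here --- it gives an inequality, and in the direction opposite to the one you need. The paper's proof confronts this directly: it shows that $\liminf_k \tfrac{\delta}{\alpha^k}\,g(z^k)^T y^k \ge 0$ via a coordinate-by-coordinate sign analysis, using the key fact that $i\in\hat N$ and $\bar x_i=l_i$ force $g_i(\bar x)\le 0$ (otherwise $i$ would eventually land in $A_l(\tilde x^k)$), and symmetrically at $u_i$. That is the actual delicate point.

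\textbf{The reference-value gap is a non-issue.} Whenever the line search at Step~28 is executed, the algorithm's bookkeeping guarantees $f_R^{q(k)}\ge f(\tilde x^k)$ (either an update of $f_R$ including $f(\tilde x^k)$ has just occurred, or a backtrack has reset $k=l^j$). The paper simply uses this nonnegativity to replace $f_R^{q(k)}$ by $f(\tilde x^k)$ in the failed-step inequality, after which dividing by $\tilde\alpha^k$ and passing to the limit yields $-\eta\ge -\gamma\eta$, contradicting $\gamma<1$. No $o(\tilde\alpha^k)$ estimate on $f_R^{q(k)}-f(\tilde x^k)$ is needed, and the telescoping argument you sketch would not deliver one anyway: that gap is tied to increments at \emph{earlier} iterations and has no reason to be small relative to the current $\tilde\alpha^k$.
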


\begin{proof}
We can identify two iteration index subsets $H,K \subseteq \{1,2,\dots\}$, such that:
\begin{itemize}
\item $N(\tilde x^k) \neq \emptyset$ and $g_N(\tilde x^k) \neq 0$, for all $k \in K$,
\item $H := \{1,2,\dots\} \setminus K$.
\end{itemize}
By assumption, the algorithm does not terminate in a finite number of iterations,
and then, at least one of the above sets is infinite.
Since we are interested in the asymptotic behavior of the sequence produced by \ASABCP,
we assume without loss of generality that both $H$ and $K$ are infinite sets. \\
Taking into account Step~$31$ in Algorithm~1, it is straightforward to verify that
\[
\lim_{k\to\infty, \, k\in H} g(\tilde x^k)^Td^k = 0.
\]
Therefore, we limit our analysis to consider the subsequence $\{x^k\}_K$. Let $\bar x$ be any limit point of $\{x^k\}_K$.
By contradiction, we assume that~\eqref{lim_gd} does not hold.
Using~\eqref{conv_x_tilde} of Lemma~\ref{lemma_nm3}, since $\{x^k\}$, $\{\tilde x^k\}$ and $\{d^k\}$ are limited,
and taking into account that $A_l(x^k)$, $A_u(x^k)$ and $N(x^k)$ are subsets of a finite set of indices,
without loss of generality we redefine $\{x^k\}_K $ the subsequence such that
\[
\lim_{k\to\infty,\, k\in K}x^k = \lim_{k\to\infty,\, k\in K}\tilde x^k = \bar x,
\]
and
\begin{gather*}
N^k := \hat N,\quad A_l^k := \hat A_l,\quad A_u^k := \hat A_u,\quad \forall k \in K, \\
\lim_{k\to\infty, \, k\in K} d^k = \hat d. \label{lim_d}
\end{gather*}
Since we have assumed that~\eqref{lim_gd} does not hold, the above relations, combined with~\eqref{conddir2} and the continuity of the gradient,
imply that
\begin{equation}\label{lim_gd_contr}
\lim_{k\to\infty, \, k\in K} g(\tilde x^k)^Td^k = g(\bar x)^T \hat d = -\eta < 0.
\end{equation}
It follows that
\[
\lim_{k\to\infty, \, k\in K} \hat d \ne 0,
\]
and then, recalling~\eqref{conv_x} of Lemma~\ref{lemma_nm3}, we get
\begin{equation}\label{contr_alpha}
\lim_{k\to\infty, \, k\in K} \alpha^k = 0.
\end{equation}
Consequently, from the instructions of the algorithm, there must exist a subsequence (renamed $K$ again) such that the line search procedure at Step~$28$
is performed and $\alpha^k < 1$ for sufficiently large $k$. Namely,
\begin{equation}\label{arm_not_satisf1}
\begin{split}
f\bigl(\bigl[\tilde x^k + \frac{\alpha^k}{\delta}d^k\bigr]^{\sharp}\bigr) & > f^{q(k)}_R + \gamma \frac{\alpha^k}{\delta} g(\tilde x^k)^Td^k \\
& \ge f(\tilde x^k) + \gamma \frac{\alpha^k}{\delta} g(\tilde x^k)^Td^k, \quad \forall k \ge \bar k, \, k \in K,
\end{split}
\end{equation}
where $q(k) := \max \{ j \colon l^j \le k \}$.
We can write the point $[\tilde x^k + \frac{\alpha^k}{\delta}d^k]^{\sharp}$ as follows:
\begin{equation}\label{x_armijo_rej}
\bigl[\tilde x^k + \frac{\alpha^k}{\delta}d^k\bigl]^{\sharp} = \tilde x^k + \frac{\alpha^k}{\delta}d^k - y^k,
\end{equation}
where
\[
y^k_i := \max\bigl\{0, \bigl(\tilde x^k + \frac{\alpha^k}{\delta}d^k\bigr)_i - u_i\bigr\} - \max\bigl\{0,l_i-\bigl(\tilde x^k + \frac{\alpha^k}{\delta}d^k\bigr)_i\bigr\}, \quad i=1,\dots,n.
\]
As $\{\tilde x^k\}$ is a sequence of feasible points, $\{\alpha^k\}$ converges to zero and $\{d^k\}$ is limited, we get
\begin{equation}\label{lim_y_k}
\lim_{k\to\infty, \, k\in K} y^k = 0.
\end{equation}
From~\eqref{arm_not_satisf1} and~\eqref{x_armijo_rej}, we can write
\begin{equation}\label{arm_not_satisf2}
f\bigl(\tilde x^k + \frac{\alpha^k}{\delta}d^k - y^k\bigr) - f(\tilde x^k) > \gamma \frac{\alpha^k}{\delta} g(\tilde x^k)^Td^k, \quad \forall k \ge \bar k, \, k \in K.
\end{equation}
By the mean value theorem, we have
\begin{equation}\label{mean_th_point}
f\bigl(\tilde x^k + \frac{\alpha^k}{\delta}d^k - y^k\bigr) = f(\tilde x^k) + \frac{\alpha^k}{\delta}g(z^k)^Td^k - g(z^k)^Ty^k,
\end{equation}
where
\begin{equation}\label{z_k}
z^k = \tilde x^k + \theta^k\bigl(\frac{\alpha^k}{\delta}d^k - y^k\bigr), \quad \theta^k \in ]0,1[.
\end{equation}
From~\eqref{contr_alpha} and~\eqref{lim_y_k}, and since $\{d^k\}$ is limited, we obtain
\begin{equation}\label{lim_z}
\lim_{k\to\infty, \, k\in K} z^k = \bar x.
\end{equation}
Substituting~\eqref{mean_th_point} into~\eqref{arm_not_satisf2}, and multiplying each term by $\dfrac{\delta}{\alpha^k}$, we get
\begin{equation}\label{arm_not_satisf3}
g(z^k)^Td^k - \frac{\delta}{\alpha^k}g(z^k)^Ty^k > \gamma g(\tilde x^k)^Td^k, \quad \forall k \ge \bar k, \, k \in K.
\end{equation}
From the definition of $y^k$, it follows that
\begin{equation}\label{y_k}
y^k_i =
\begin{cases}
0, \qquad & \text{ if } l_i \le \tilde x^k_i + \frac{\alpha^k}{\delta}d^k_i \le u_i, \\
\tilde x^k_i + \frac{\alpha^k}{\delta}d^k_i - u_i > 0, \qquad & \text{ if } \tilde x^k_i + \frac{\alpha^k}{\delta}d^k_i > u_i, \\
\tilde x^k_i + \frac{\alpha^k}{\delta}d^k_i - l_i < 0, \qquad & \text{ if } l_i > \tilde x^k_i + \frac{\alpha^k}{\delta}d^k_i.
\end{cases}
\end{equation}
In particular, we have
\begin{equation}\label{y_k_sign}
y^k_i
\begin{cases}
= 0, \quad & \text{ if } d^k_i = 0, \\
\in [0 , \frac{\alpha^k}{\delta} d^k_i], \quad & \text{ if } d^k_i > 0, \\
\in [-\frac{\alpha^k}{\delta} d^k_i, 0], \quad & \text{ if } d^k_i < 0.
\end{cases}
\end{equation}
From the above relation, it is straightforward to verify that
\begin{equation}\label{y_vs_d}
|y^k_i| \le \frac{\alpha^k}{\delta}|d^k_i|, \quad i=1,\dots,n.
\end{equation}
In the following, we want to majorize the left-hand side of~\eqref{arm_not_satisf3} by showing that $\{\frac{\alpha^k}{\delta}g(z^k)^Ty^k\}$ converges to a nonnegative value. To this aim, we analyze three different cases, depending on whether $\bar x_i$ is at the bounds or is strictly feasible:
\begin{description}
\item[(i)] $i \in \hat N$ such that $l_i < \bar x_i < u_i$. As $\{\tilde x^k\}$ converges to $\bar x$, there exists $\tau > 0$ such that
    \[
    l_i + \tau \le \tilde x^k \le u_i - \tau, \qquad k \in K, \, k \text{ sufficiently large}.
    \]
    Since $\{\alpha^k\}$ converges to zero and $\{d^k\}$ is limited, it follows that $\frac{\alpha^k}{\delta} |d^k_i| < \tau$, for $k \in K$, $k$~sufficiently large. Then,
    \[
    l_i < \tilde x^k_i + \frac{\alpha^k}{\delta}d^k_i < u_i, \qquad k \in K, \, k \text{ sufficiently large},
    \]
    which implies, from~\eqref{y_k}, that
    \begin{equation}\label{y_zero_1}
    y^k_i = 0, \qquad k \in K, \, k \text{ sufficiently large}.
    \end{equation}
\item[(ii)] $i \in \hat N$ such that $\bar x_i = l_i$. First, we show that
    \begin{align}
    & g_i(\bar x) \le 0, \label{g_i_x} \\
    & y^k_i \le 0, \qquad k \in K, \, k \text{ sufficiently large}. \label{lim_y}
    \end{align}
    To show~\eqref{g_i_x}, we assume by contradiction that $g_i(\bar x) > 0$. From~\eqref{lambdafunc} and recalling that $\|\tilde x^k - x^k\|$ converges to zero from~\eqref{conv_x_tilde} of Lemma~\ref{lemma_nm3}, it follows that
    \[
    \lim_{k\to\infty, \, k\in K} \lambda_i(\tilde x^k) = \lim_{k\to\infty, \, k\in K} g_i(\tilde x^k) = g_i(\bar x) > 0.
    \]
    Then, there exist an iteration index $\hat k$ and a scalar $\xi > 0$ such that $\lambda_i(\tilde x^k) \ge \xi > 0$, for all $k \ge \hat k, \, k \in K$.
    As $\{\tilde x^k_i\}$ converges to $l_i$, there also exists $\tilde k \ge \hat k$ such that
    \begin{align*}
    l_i \le \tilde x^k_i \le l_i + \epsilon \xi \le l_i + \epsilon \lambda_i(\tilde x^k), & \qquad k \in K, \, k \ge \tilde k,  \\
    g_i(\tilde x^k) > 0, & \qquad k \in K, \, k \ge \tilde k,
    \end{align*}
    which contradicts the fact that $i \in N(\tilde x^k)$ for $k$ sufficiently large.
    To show~\eqref{lim_y}, we observe that since $\{\tilde x^k_i\}$ converges to $l_i$, there exists $\tau \in ]0,u_i-l_i]$ such that
    \[
    l_i \le \tilde x^k_i \le u_i-\tau, \qquad k \in K, \, k \text{ sufficiently large}.
    \]
    Moreover, since $\{\alpha^k\}$ converges to zero and $\{d^k\}$ is limited, it follows that $\frac{\alpha^k}{\delta} d^k_i \le \tau$, for $k \in K$, $k$~sufficiently large. Then,
    \[
    \tilde x^k + \frac{\alpha^k}{\delta} d^k_i \le u_i, \qquad k \in K, \, k \text{ sufficiently large}.
    \]
    The above relation, combined with~\eqref{y_k}, proves~\eqref{lim_y}.
    Now, we distinguish two subcases, depending on the sign of $d^k_i$:
    \begin{itemize}
    \item for every subsequence $\bar K \subseteq K$ such that $d^k_i \ge 0$, from~\eqref{y_k_sign} it follows that $y^k_i \geq 0$. Consequently, from~\eqref{lim_y} we can write
        \begin{equation}\label{y_zero_2}
        y^k_i = 0, \qquad k \in \bar K, \, k \text{ sufficiently large}.
        \end{equation}
    \item for every subsequence $\bar K \subseteq K$ such that $d^k_i < 0$, we have two further possible situations, according to~\eqref{g_i_x}:
        \begin{itemize}
        \item[(a)] $g_i(\bar x) < 0$. As $\{z^k\}$ converges to $\bar x$, then $g_i(z^k) \le 0$ for $k \in \bar K$, $k$ sufficiently large. From~\eqref{lim_y}, we obtain
            \begin{equation}\label{y_zero_3}
            \frac{\delta}{\alpha^k}g_i(z^k)y^k_i \ge 0, \qquad k \in \bar K, \, k \text{ sufficiently large}.
            \end{equation}
        \item[(b)] $g_i(\bar x) = 0$. From~\eqref{y_vs_d}, we get
            \[
            \frac{\delta}{\alpha^k} |g_i(z^k)y^k_i| \le \frac{\delta}{\alpha^k} |g_i(z^k)| |y^k_i| \le |g_i(z^k)| |d^k_i|.
            \]
            Since $\{d^k\}$ is limited, $\{z^k\}$ converges to $\bar x$, and $g_i(\bar x) = 0$, from the continuity of the gradient we get
            \begin{equation}\label{y_zero_4}
            \lim_{k\to\infty, \, k\in \bar K} \frac{\delta}{\alpha^k} g_i(z^k)d^k_i = 0.
            \end{equation}
        \end{itemize}
    \end{itemize}
\item[(iii)] $i \in \hat N$ such that $\bar x_i = u_i$. Reasoning as in the previous case, we obtain
    \begin{equation}\label{y_zero_6}
    \lim_{k\to\infty, \, k\in K} \frac{\delta}{\alpha^k} g_i(z^k)y^k_i \ge 0.
    \end{equation}
\end{description}
Finally, from~\eqref{y_zero_1}, \eqref{y_zero_2}, \eqref{y_zero_3}, \eqref{y_zero_4} and~\eqref{y_zero_6}, we have
\begin{equation}\label{lim_delta_alpha_g_y}
\lim_{k\to\infty, \, k\in K} \frac{\delta}{\alpha^k} g(z^k)^Ty^k \ge 0,
\end{equation}
and, from~\eqref{lim_gd_contr}, \eqref{lim_z}, \eqref{arm_not_satisf3}, \eqref{y_zero_6} and~\eqref{lim_delta_alpha_g_y}, we obtain
\[
\begin{split}
 -\eta & = \lim_{\substack{k\to\infty \\ k\in K}} g(\tilde x^k)^T d^k = \lim_{\substack{k\to\infty \\ k\in K}} g(z^k)^T d^k \ge \lim_{\substack{k\to\infty \\ k\in K}} g(z^k)^T d^k - \lim_{\substack{k\to\infty \\ k\in K}} \frac{\delta}{\alpha^k} g(z^k)^Ty^k \\
& \ge \lim_{\substack{k\to\infty \\ k\in K}} \gamma g(\tilde x^k)^Td^k = -\gamma \eta.
\end{split}
\]
This contradicts the fact that we set $\gamma < 1$ in \ASABCP.
\end{proof}

Now, we can prove Theorem~\ref{th:glob_conv}.

\begin{proof}[Proof of Theorem~\ref{th:glob_conv}]
Let $x^*$ be any limit point of the sequence $\{x^k\}$, and let $\{x^k\}_K $ be the subsequence converging to $x^*$.
From~\eqref{conv_x_tilde} of Lemma~\ref{lemma_nm3} we can write
\begin{equation}\label{lim_x_tilde2}
\lim_{k\to\infty, \, k\in K}\tilde x^{k}=x^*,
\end{equation}
and, thanks to the fact that $A_l(x^k)$, $A_u(x^k)$ and $N(x^k)$ are subsets of a finite set of indices, we can define a further subsequence $\hat K \subseteq K$ such that
\[
N^k := \hat N, \quad A_l^k := \hat A_l, \quad A_u^k := \hat A_u,
\]
for all $k \in \hat{K}$.
Recalling Proposition~\ref{prop:stationary_point}, we define the following function that measures the violation of the optimality conditions for feasible points:
\[
\phi(x_i) = \min \left\{ \max\{l_i-x_i, -g_i(x)\}^2, \max\{x_i-u_i, g_i(x)\}^2 \right\}.
\]
By contradiction, we assume that $x^*$ is a non-stationary point for problem~\eqref{prob}. Then, there exists an index $i$ such that $\phi(x^*_i) > 0$. From~\eqref{lim_x_tilde2} and the continuity of $\phi$, there exists an index $\tilde{k}$
such that
\begin{equation}\label{violation}
\phi(\tilde{x}^k_i) \ge \Delta > 0, \quad \forall k \ge \tilde k.
\end{equation}
Now, we consider three cases:
\begin{description}
\item[(i)] $i \in \hat A_l$. Then, $\tilde x^k_i = l_i$. From~\eqref{lambdafunc} and \eqref{Al}, we get $g_i(x^k) > 0, \, \forall k \in \hat K$. By continuity of the gradient, and since both $\{\tilde{x}^k\}_{\hat K}$ and $\{x^k\}_{\hat K}$ converge to $x^*$, we obtain
    \[
    g_i(\tilde x^k) \ge -\frac{\Delta}{2},
    \]
    for $k \in \hat{K}$, $k$ sufficiently large. Then, we have $\phi(\tilde{x}^k_i) \le \frac{\Delta^2}{4} < \Delta$ for $k \in \hat{K}$, $k$ sufficiently large. This contradicts~\eqref{violation}.
\item[(ii)] $i \in \hat A_u$. Then, $\tilde x^k_i = u_i$. The proof of this case is a verbatim repetition of the previous case.
\item[(iii)] $i \in \hat N$. As $\phi(x_i^*) > 0$, then $g_i(x^*) \ne 0$. From Theorem~\ref{th:lim_dir_der}, we have
    \[
    \lim_{k\to\infty,\, k\in \hat K} g(\tilde x^k)^T d^k = 0.
    \]
    From~\eqref{conddir2}, it follows that
    \[
    \lim_{k\to\infty,\, k\in \hat K} \|g_{\hat N}(\tilde x^k)\| = \|g_{\hat N}(x^*)\| = 0,
    \]
    leading to a contradiction. 
\end{description}
\end{proof}

In order to prove Theorem~\ref{th:superlin}, we need a further lemma.
\begin{lemma}\label{lemma:conv1}
Let Assumption~\ref{ass:eps} hold and assume that $\{x^k\}$ is an infinite sequence generated by \ASABCP. Then, there exists an iteration index $\bar k$ such that
$N(x^k) \neq \emptyset$ for all $k \geq \bar k$.
\end{lemma}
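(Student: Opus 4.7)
The plan is to argue by contradiction. Suppose there exists an infinite set of indices $K$ with $N(x^k) = \emptyset$ for every $k \in K$. Since $\{1,\dots,n\}$ admits only finitely many partitions into two disjoint subsets, a pigeonhole argument lets me extract an infinite subset $\hat K \subseteq K$ along which the active-set estimates are constant: $A_l(x^k) \equiv \hat A_l$ and $A_u(x^k) \equiv \hat A_u$, with $\hat A_l \cup \hat A_u = \{1,\dots,n\}$. By Step~$3$ of \ASABCP, the point $\tilde x^k$ then coincides along $\hat K$ with the fixed vertex $\hat x$ defined by $\hat x_i = l_i$ for $i \in \hat A_l$ and $\hat x_i = u_i$ for $i \in \hat A_u$.

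The next step is to identify $\hat x$ as a stationary point of problem~\eqref{prob}. Lemma~\ref{lemma_nm3} guarantees $\|\tilde x^k - x^k\| \to 0$, so combining this with $\tilde x^k = \hat x$ on $\hat K$ yields $x^k \to \hat x$ along $\hat K$. Theorem~\ref{th:glob_conv} then forces $\hat x$ to be stationary, and Proposition~\ref{prop:stationary_point} implies $g_i(\hat x) = 0$ for every $i \in N(\hat x)$.

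I would then inspect what \ASABCP\ does at iterations $k \in \hat K$ that reach Step~$8$. There the algorithm computes $\tilde N^k = N(\hat x)$, and by the previous observation $g_{\tilde N^k}(\tilde x^k) = g_{N(\hat x)}(\hat x) = 0$. Hence the test at Step~$9$---which requires both $\tilde N^k \neq \emptyset$ and $g_{\tilde N^k}(\tilde x^k) \neq 0$---fails, regardless of whether $N(\hat x)$ is empty or not. The flow passes to Steps~$30$--$31$, producing $x^{k+1} = \tilde x^k = \hat x$. Since $\hat x$ is stationary, the while guard at Step~$1$ terminates the algorithm, contradicting the hypothesis that $\{x^k\}$ is infinite.

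The main obstacle I anticipate is ruling out the possibility that the backtracking branch at Steps~$5$--$7$ short-circuits every iteration of $\hat K$ before Step~$8$ is reached. To address this I would combine Lemma~\ref{lemma_nm1}, which via $f_R^h < f_R^j$ for $h > j+M$ implies $f_R^j > \bar f_R$ strictly for every $j$, with Lemma~\ref{lemma_nm3}, which gives $f(x^k) \to \bar f_R = f(\hat x)$ along $\hat K$. These two facts together ensure that the test $f(x^k) \ge f_R^{j(k)}$ at Step~$6$ cannot be triggered at every large $k \in \hat K$, so infinitely many iterations of $\hat K$ reach Step~$8$ with $\tilde x^k = \hat x$, and the contradiction above is obtained.
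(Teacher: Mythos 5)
Your proof is correct and reaches the same contradiction as the paper's---namely, that $\tilde x^k$ coincides with a stationary point for some large $k$ in the offending subsequence, which forces the algorithm to terminate---but it gets there by a genuinely different route. The paper first extracts a limit point $x^*$ of $\{x^k\}_{\bar K}$, invokes Theorem~\ref{th:glob_conv} to make it stationary, and then uses the active-set identification property of Proposition~\ref{prop:estim} (the inclusions $A_l(x^k)\subseteq\{i\colon x^*_i=l_i\}$ and $A_u(x^k)\subseteq\{i\colon x^*_i=u_i\}$ in a neighborhood of $x^*$), combined with $A_l^k\cup A_u^k=\{1,\dots,n\}$, to conclude that $\tilde x^{\tilde k}=x^*$ exactly. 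You instead observe that $N(x^k)=\emptyset$ forces $\tilde x^k$ to be a vertex of the box, pigeonhole over the finitely many vertices to freeze $\tilde x^k\equiv\hat x$ along a further subsequence, and only then use Lemma~\ref{lemma_nm3} and Theorem~\ref{th:glob_conv} to conclude that $\hat x$ is itself the (stationary) limit point. Your version is more elementary in that it dispenses with Proposition~\ref{prop:estim} entirely, and it is more explicit than the paper about why stationarity of $\tilde x^k$ yields the contradiction (the test at Step~$9$ fails by Proposition~\ref{prop:stationary_point}, so Step~$31$ gives $x^{k+1}=\hat x$ and the while guard fires); the paper leaves this last step implicit. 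Two minor remarks: only one index of $\hat K$ needs to survive to Step~$9$, not infinitely many; and your resolution of the Step-$6$ worry is slightly loose, since $f(x^k)\to\bar f_R$ and $f^j_R>\bar f_R$ do not by themselves order two sequences sharing the same limit. The worry is moot anyway: in the settled sequence every iteration produces $x^{k+1}$ at Step~$21$, $29$ or $31$ (cf.\ the partition $\mathcal K_1\cup\mathcal K_2\cup\mathcal K_3$ in the proof of Lemma~\ref{lemma_nm3}), and each of these presupposes that the test at Step~$9$ was evaluated at $\tilde x^k$, which is all your argument needs.
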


\begin{proof}
By contradiction, we assume that there exists an infinite index subset $\bar K \subseteq \{1,2,\dots\}$ such that $N(x^k) = \emptyset$ for all $k \in \bar K$.
Let $x^*$ be a limit point of $\{x\}_{\bar K}$, that is,
\[
\lim_{k \to \infty, \, k \in K} x^k = x^*,
\]
where $K \subseteq \bar K$. Theorem~\ref{th:glob_conv} ensures that $x^*$ is a stationary point. From~\eqref{conv_x_tilde} of Lemma~\ref{lemma_nm3}, we can write
\[
\lim_{k \to \infty, \, k \in K} x^k = \lim_{k \to \infty, \, k \in K} \tilde x^k = x^*.
\]
Moreover, from Proposition~\ref{prop:estim}, there exists an index $\hat k$ such that
\begin{gather}
\{i \colon x^*_i = l_i, \lambda^*_i > 0 \} \subseteq A_l(x^k) \subseteq \{i \colon x^*_i = l_i\}, \quad \forall \ k \geq \hat k, \quad k \in K, \label{A_l_included} \\
\{i \colon x^*_i = u_i, \mu^*_i > 0 \} \subseteq A_u(x^k) \subseteq \{i \colon x^*_i = u_i\}, \quad \forall \ k \geq \hat k, \quad k \in K. \label{A_u_included}
\end{gather}
Let $\tilde k$ be the smallest integer such that $\tilde k \geq \hat k$ and $\tilde k \in K$. From~\eqref{A_l_included} and~\eqref{A_u_included}, we can write
\begin{align*}
\tilde x^{\tilde k}_i = l_i = x^*_i, \quad & \text{ if } i \in A_l(x^{\tilde k}), \\
\tilde x^{\tilde k}_i = u_i = x^*_i, \quad & \text{ if } i \in A_u(x^{\tilde k}).
\end{align*}
Since $N(x^k)$ is empty for all $k \in K$, we also have
\[
A_l(x^k) \cup A_u(x^k) = \{1,\dots,n\}, \quad \forall \ k \in K.
\]
Consequently, $\tilde x^{\tilde k} = x^*$, contradicting the hypothesis that the sequence $\{x^k\}$ is infinite. 
\end{proof}

Now, we can finally prove Theorem~\ref{th:superlin}.

\begin{proof}[Proof of Theorem~\ref{th:superlin}]
From Proposition~\ref{prop:estim}, exploiting the fact the sequence $\{x^k\}$ converges to $x^*$ and that strict complementarity holds,
we have that for sufficiently large $k$,
\begin{align*}
& N(x^k) = N(\tilde x^k) = N^*, \\
& A_l(x^k) = A_l(\tilde x^k) = \{i \colon x^*_i = l_i\}, \\
& A_u(x^k) = A_u(\tilde x^k) = \{i \colon x^*_i = u_i\}.
\end{align*}
From the instructions of the algorithm, it follows that $\tilde x^k = x^k$ for sufficiently large $k$,
and then, the minimization is restricted on $N(\tilde x^k)$.
From Lemma~\ref{lemma:conv1}, we have that $N(\tilde x^k) = N(x^k) = N^* \ne \emptyset$ for sufficiently large $k$.
Furthermore, from~\eqref{tronc}, we have that $d_{N(\tilde x^k)}^k$ is a Newton-truncated direction, and then,
the assertion follows from standard results on unconstrained minimization.
\end{proof}
\end{appendices}


\begin{thebibliography}{10}
\providecommand{\url}[1]{{#1}}
\providecommand{\urlprefix}{URL }
\expandafter\ifx\csname urlstyle\endcsname\relax
  \providecommand{\doi}[1]{DOI~\discretionary{}{}{}#1}\else
  \providecommand{\doi}{DOI~\discretionary{}{}{}\begingroup
  \urlstyle{rm}\Url}\fi

\bibitem{conn:1988}
Conn, A.R., Gould, N.I., Toint, P.L.: {Global convergence of a class of trust
  region algorithms for optimization with simple bounds}.
\newblock SIAM J. Numer. Anal. \textbf{25}(2), 433--460 (1988)

\bibitem{lin:1999}
Lin, C.J., Mor{\'e}, J.J.: {Newton's method for large bound-constrained
  optimization problems}.
\newblock SIAM J. Optim. \textbf{9}(4), 1100--1127 (1999)

\bibitem{dennis:1998}
Dennis, J., Heinkenschloss, M., Vicente, L.N.: {Trust-region interior-point SQP
  algorithms for a class of nonlinear programming problems}.
\newblock SIAM J. Control Optim. \textbf{36}(5), 1750--1794 (1998)

\bibitem{heinkenschloss:1999}
Heinkenschloss, M., Ulbrich, M., Ulbrich, S.: {Superlinear and quadratic
  convergence of affine-scaling interior-point Newton methods for problems with
  simple bounds without strict complementarity assumption}.
\newblock Math. Program. \textbf{86}(3), 615--635 (1999)

\bibitem{kanzow:2006}
Kanzow, C., Klug, A.: {On affine-scaling interior-point Newton methods for
  nonlinear minimization with bound constraints}.
\newblock Comput. Optim. Appl. \textbf{35}(2), 177--197 (2006)

\bibitem{bertsekas:1982}
Bertsekas, D.P.: {Projected Newton methods for optimization problems with
  simple constraints}.
\newblock SIAM J. Control Optim. \textbf{20}(2), 221--246 (1982)

\bibitem{facchinei:2002}
Facchinei, F., Lucidi, S., Palagi, L.: {A truncated Newton algorithm for large
  scale box constrained optimization}.
\newblock SIAM J. Optim. \textbf{12}(4), 1100--1125 (2002)

\bibitem{hager:2006}
Hager, W.W., Zhang, H.: {A new active set algorithm for box constrained
  optimization}.
\newblock SIAM J. Optim. \textbf{17}(2), 526--557 (2006)

\bibitem{schwartz:1997}
Schwartz, A., Polak, E.: {Family of projected descent methods for optimization
  problems with simple bounds}.
\newblock J. Optim. Theory Appl. \textbf{92}(1), 1--31 (1997)

\bibitem{facchinei:1998}
Facchinei, F., J{\'u}dice, J., Soares, J.: {An active set Newton algorithm for
  large-scale nonlinear programs with box constraints}.
\newblock SIAM J. Optim. \textbf{8}(1), 158--186 (1998)

\bibitem{cheng:2012}
Cheng, W., Li, D.: {An active set modified Polak--Ribiere--Polyak method for
  large-scale nonlinear bound constrained optimization}.
\newblock J. Optim. Theory Appl. \textbf{155}(3), 1084--1094 (2012)

\bibitem{andreani:2010}
Andreani, R., Birgin, E.G., Mart{\'\i}nez, J.M., Schuverdt, M.L.: {Second-order
  negative-curvature methods for box-constrained and general constrained
  optimization}.
\newblock Comput. Optim. Appl. \textbf{45}(2), 209--236 (2010)

\bibitem{birgin:2002}
Birgin, E.G., Mart{\'\i}nez, J.M.: {Large-scale active-set box-constrained
  optimization method with spectral projected gradients}.
\newblock Comput. Optim. Appl. \textbf{23}(1), 101--125 (2002)

\bibitem{desantis:2012}
De~Santis, M., Di~Pillo, G., Lucidi, S.: {An active set feasible method for
  large-scale minimization problems with bound constraints}.
\newblock Comput. Optim. Appl. \textbf{53}(2), 395--423 (2012)

\bibitem{facchinei:1995}
Facchinei, F., Lucidi, S.: {Quadratically and superlinearly convergent
  algorithms for the solution of inequality constrained minimization problems}.
\newblock J. Optim. Theory Appl. \textbf{85}(2), 265--289 (1995)

\bibitem{barzilai:1988}
Barzilai, J., Borwein, J.M.: {Two-point step size gradient methods}.
\newblock IMA J. Numer. Anal. \textbf{8}(1), 141--148 (1988)

\bibitem{desantis:2016}
De~Santis, M., Lucidi, S., Rinaldi, F.: {A Fast Active Set Block Coordinate
  Descent Algorithm for $\ell_1$-regularized least squares}.
\newblock SIAM J. Optim. \textbf{26}(1), 781--809 (2016)

\bibitem{buchheim:2015}
Buchheim, C., De~Santis, M., Lucidi, S., Rinaldi, F., Trieu, L.: {A feasible
  active set method with reoptimization for convex quadratic mixed-integer
  programming}.
\newblock SIAM J. Optim., \textbf{26}(3), 1695–1714 (2016)

\bibitem{dipillo:1984}
Di~Pillo, G., Grippo, L.: {A class of continuously differentiable exact penalty
  function algorithms for nonlinear programming problems}.
\newblock In: System Modelling and Optimization, pp. 246--256. Springer, Berlin (1984)

\bibitem{grippo2:1991}
Grippo, L., Lucidi, S.: {A differentiable exact penalty function for bound
  constrained quadratic programming problems}.
\newblock Optimization \textbf{22}(4), 557--578 (1991)

\bibitem{zhang:2004}
Zhang, H., Hager, W.W.: {A nonmonotone line search technique and its
  application to unconstrained optimization}.
\newblock SIAM J. Optim. \textbf{14}(4), 1043--1056 (2004)

\bibitem{dembo:1983}
Dembo, R.S., Steihaug, T.: {Truncated-Newton algorithms for large-scale
  unconstrained optimization}.
\newblock Math. Program. \textbf{26}(2), 190--212 (1983)

\bibitem{grippo:1991}
Grippo, L., Lampariello, F., Lucidi, S.: {A class of nonmonotone stabilization
  methods in unconstrained optimization}.
\newblock Numer. Math. \textbf{59}(1), 779--805 (1991)

\bibitem{gould:2003}
Gould, N.I., Orban, D., Toint, P.L.: {GALAHAD, a library of thread-safe Fortran
  90 packages for large-scale nonlinear optimization}.
\newblock ACM Trans. Math. Softw. (TOMS) \textbf{29}(4), 353--372 (2003)

\bibitem{gould:2015}
Gould, N.I., Orban, D., Toint, P.L.: {CUTEst: a constrained and unconstrained
  testing environment with safe threads for mathematical optimization}.
\newblock Comput. Optim. Appl. \textbf{60}(3), 545--557 (2015)

\bibitem{dolan:2002}
Dolan, E.D., Mor{\'e}, J.J.: {Benchmarking optimization software with
  performance profiles}.
\newblock Math. Program. \textbf{91}(2), 201--213 (2002)

\bibitem{birgin:2012}
Birgin, E.G., Gentil, J.M.: {Evaluating bound-constrained minimization
  software}.
\newblock Comput. Optim. Appl. \textbf{53}(2), 347--373 (2012)

\end{thebibliography}
\end{document}